\providecommand{\U}[1]{\protect\rule{.1in}{.1in}}
\theoremstyle{plain}
\newtheorem{algorithm}{Algorithm}[section]
\newtheorem{thm}{Thm}
\newtheorem{corollary}[algorithm]{Corollary}
\newtheorem{definition}[algorithm]{Definition}
\newtheorem{lemma}[algorithm]{Lemma}
\newtheorem{theorem} [algorithm] {Theorem}
\newtheorem{theoremlet}[thm]{Theorem}
\newtheorem{theoremlet'}[thm]{Theorem$'$}
\newtheorem{remark}[algorithm]{Remark}
\newtheorem{proposition}[algorithm]{Proposition}
\newtheorem*{msrconj}{Maximal Symmetry Rank Conjecture}
\newtheorem*{observe*}{Observation}
\newtheorem*{CondC}{Property C}
\def\qqq{\mathbb{Q}}
\def\rrr{\mathbb{R}}
\def\ccc{\mathbb{C}}
\def\zzz{\mathbb{Z}}
\DeclareMathOperator{\Isom}{Isom}
\DeclareMathOperator{\Fix}{Fix}
\def\codim{\textrm{codim}}
\def\bdm{\begin{displaymath}}
\def\edm{\end{displaymath}}
\def\beq{\begin{equation}}
\def\eeq{\end{equation}}
\def\bes{\begin{equation*}}
\def\ees{\end{equation*}}
\def\epcm{\end{picture}\end{center}\end{minipage}}
\def\bpcm{\begin{minipage}{80pt}\begin{center}\begin{picture}}
\def\t2{T^2}
\def\f4{F_4}
\def\g2{G_2}
\def\p2{\frac{\pi}{2}}
\def\Fix{\textrm{Fix}}
\def\dim{\textrm{dim}}
 \numberwithin{equation}{section}
  \numberwithin{figure}{section}
\newtheorem*{ack}{Acknowledgements}
\begin{document}
%%%%%%%%%%%%%%%%%%%%%%%%%%%%%%%%%%%%%%%%%%%%%%%%%%%%%%%%%%%%%
\newcommand{\comment}[1]{\vspace{5 mm}\par \noindent
\marginpar{\textsc{Note}}
\framebox{\begin{minipage}[c]{0.95 \textwidth}
#1 \end{minipage}}\vspace{5 mm}\par}

\title[Almost Isotropy-Maximal Manifolds of Non-negative Curvature]{Almost Isotropy-Maximal Manifolds of Non-negative Curvature}

\author[Dong]{Zheting Dong}
\address[Dong]{Huawei's Hong Kong Research Centre, Hong Kong}
\email{zhetingdong1031@gmail.com}

\author[Escher]{Christine Escher}
\address[Escher]{Department of Mathematics, Oregon State University, Corvallis, Oregon}
\email{escherc@oregonstate.edu}

\author[Searle]{Catherine Searle}
\address[Searle]{Department of Mathematics, Statistics, and Physics, Wichita State University, Wichita, Kansas}
\email{Catherine.Searle@wichita.edu}

\subjclass[2000]{Primary: 53C20; Secondary: 57S25} 

\date{\today}

%-------------------------------------------
%        Abstract
%-------------------------------------------
%

\begin{abstract} 

We extend the equivariant classification results of Escher and Searle \cite{ES2},  for closed, simply connected,  Riemannian $n$-manifolds with non-negative sectional curvature admitting  isometric isotropy-maximal torus actions to the class of such manifolds admitting isometric strictly almost isotropy-maximal torus actions.  In particular, we 
 prove that any such manifold is equivariantly diffeomorphic to the free, linear quotient by a torus of  a product of spheres of dimensions greater than or equal to three.

\end{abstract}
\maketitle

%-----------------------------------------------------------
% 	SECTION:		Introduction
%-----------------------------------------------------------

\section{Introduction}

The classification of closed, simply-connected Riemannian manifolds with positive or non-negative sectional curvature is a long-standing 
problem in Riemannian geometry.  One successful approach to this problem has been the Grove Symmetry Program, which aims to classify such manifolds with ``large" symmetries. 

An important first step is to consider the case of continuous abelian symmetries, that is, of torus actions.  
The Symmetry Program suggests starting with the largest possible torus action this class of manifolds can admit and then successively reducing the rank of the torus in the hopes of finding either new examples, new constructions, or new obstructions. 

In particular,  {\em isotropy-maximal} torus actions, that is, torus actions for which the rank of the largest isotropy subgroup equals the cohomogeneity of the action, are especially appealing due to the structure such an action imposes on the manifold.
Recently, Escher and Searle \cite{ES2} obtained a classification up to equivariant diffeomorphism of closed, simply connected, non-negatively curved Riemannian $n$-manifolds admitting effective,  isometric isotropy-maximal $T^{k}$-actions for $  \lfloor(n+1)/2\rfloor 
\leq k\leq \lfloor 2n/3\rfloor$.  This work suggests a different approach to the general classification problem for this class of manifolds: rather than try to classify them via the rank of the group action, instead classify such manifolds via the rank of the largest possible isotropy group of the action.  The classification in  \cite{ES2} handles the first step, namely, the case where the action is isotropy-maximal.  The next step in this program is to reduce the rank of the largest possible isotropy subgroup.

In this paper, we focus on  
{\em almost isotropy-maximal} torus actions, that is, torus actions for which there exists an isotropy subgroup of rank equal to  the cohomogeneity of the action minus one.   In particular, we say the action is {\em strictly} almost isotropy-maximal when the action is almost isotropy-maximal, but not isotropy-maximal.
Our main result, stated below in  Theorem \ref{main}, gives an equivariant diffeomorphism classification
of simply connected, non-negatively curved  manifolds admitting effective, isometric, almost isotropy-maximal torus actions, thus generalizing the main result in \cite{ES2}.

 We first introduce a crucial new tool for the proof of our main result:
we show that in our setting a strictly almost isotropy-maximal torus action can always be extended to a locally standard and isotropy-maximal action.

\begin{theoremlet}\label{ta}
\label{extend} 
Let $T^k$ act isometrically and effectively on $M^n$, a closed, simply connected, non-negatively curved Riemannian manifold. Assume that the action is strictly almost isotropy-maximal. Then the  $T^{k}$-action on $M^{n}$ may be uniquely extended to a smooth, effective $T^{k+1}$-action that is isotropy-maximal and  locally standard.
Moreover, $M^n/T^{k+1}$ and  all of its faces are diffeomorphic to a disk, after smoothing the corners. 
\end{theoremlet}

We also obtain the following two extensions of Theorem 1.1 of Wiemeler \cite{Wie}, and of Theorem B  of Galaz-Garc\'ia, Kerin, Radeschi, and Wiemeler \cite{GGKRW}. 

\begin{theoremlet}\label{RE} Let $M^n$ be a closed, rationally elliptic $n$-manifold admitting a smooth, effective, locally standard, and isotropy-maximal  $T^k$-action. 
Then 
$M^n$ is equivariantly homeomorphic to a quotient of a free linear torus action of 
\begin{equation}\label{E1}
\mathcal{Z}^m=\prod_{i<r} S^{2n_i} \times \prod_{i\geq r} S^{2n_i-1},\, \, n_i\geq 2, \,\,   \textrm{where} \,\,\,
 n\leq m\leq 3n-3k\,.
 \end{equation}
\end{theoremlet}

If we  add in the condition that the $4$-dimensional faces of the quotient are diffeomorphic to disks, after smoothing the corners, then we may improve the conclusion of Theorem \ref{RE} to one of  equivariant diffeomorphism.
\begin{theoremlet}\label{RE2} Let $M^n$ be a closed, rationally elliptic $n$-manifold admitting a smooth, effective, locally standard, and isotropy-maximal  $T^k$-action. Suppose all $4$-dimensional faces of $M^n/T^k$ are diffeomorphic to disks, after smoothing the corners.
Then 
$M^n$ is equivariantly diffeomorphic to a quotient of a free linear torus action of 
\begin{equation}\label{E1}
\mathcal{Z}^m=\prod_{i<r} S^{2n_i} \times \prod_{i\geq r} S^{2n_i-1},\, \, n_i\geq 2, \,\,   \textrm{where} \,\,\,
 n\leq m\leq 3n-3k\,.
 \end{equation}
\end{theoremlet}

We note that an isotropy-maximal action is also almost isotropy-maximal.
Theorems \ref{ta},  \ref{RE2}, and  the main result in \cite{ES2} (Theorem \ref{thma} in this paper) are instrumental in proving the following theorem.

\begin{theoremlet}\label{main}
 Let $T^k$ act isometrically and effectively on $M^n$, a closed, simply connected, non-negatively curved Riemannian manifold. Assume that the action is almost isotropy-maximal.  Then the following hold:
\begin{enumerate}
\item  $  \lfloor \frac{n+1}{2}\rfloor 
\leq k \leq \lfloor \frac{2n}{3}\rfloor$,  if the $T^k$-action is isotropy-maximal; 
\item  $  \lfloor \frac{n+2}{2} \rfloor -1
\leq k \leq \lfloor \frac{2n}{3}\rfloor - 1$,  if the $T^k$-action is strictly almost isotropy-maximal; and 
\item $M$ is equivariantly diffeomorphic to a quotient of a free linear torus action of a product of spheres as in Display \eqref{E1}, noting that when the action is strictly almost isotropy-maximal then  $n\leq m\leq 3n-3k-3$. 
\end{enumerate}
\end{theoremlet}

In the situation of Theorem \ref{main}, the torus action on the quotient $\mathcal{Z}^m/T'$, where $T'$ is a free linear torus action, is defined as follows. Let $T$ be a maximal torus of $\prod_{i<r} SO(2n_i+1) \times \prod_{i\geq r} SO(2n_i)$. Then there is a natural linear action of $T$ on $\mathcal{Z}^m$. Moreover, $T'$ can be identified with a subtorus of $T$ and $T/T'$ acts on
$\mathcal{Z}^m/T'\simeq M$.

We note that the upper and lower bounds in Parts $1$ and $2$ of Theorem \ref{main} follow by combining Theorem \ref{thma} and the corresponding lower bounds on the dimension, respectively, $2k-n \ge 0$ or $2k-n+1 \ge 0, $  of the orbit of smallest dimension. 
\begin{remark} When $k<\lfloor 2n/3\rfloor$, an almost isotropy-maximal $T^k$-action  
need not be isotropy-maximal.
Consider, for example, the action of $T^3$ on $S^3\times S^3$ given by 
$$((e^{i\theta_1}, e^{i\theta_2}, e^{i\theta_3}), (z_1, z_2, w_1, w_2)\mapsto (e^{i\theta_1}z_1, e^{i\theta_2}z_2, e^{i\theta_3}w_1, w_2), $$
where $S^3\times S^3=\{(z_1, z_2, w_1, w_2) \in \ccc^4: z_1\bar{z}_1 +z_2\bar{z}_2 =  w_1\bar{w}_1 +w_2\bar{w}_2=1\}$.
\end{remark}

Recall that the {\it symmetry rank} of a manifold is defined to be the rank of the isometry group of $M$. Closed manifolds of positive sectional curvature and maximal symmetry rank were classified in Grove and Searle  \cite{GS1}, where they found that  such manifolds are  diffeomorphic to 
spheres, real projective spaces, lens spaces, or complex projective spaces. In fact, they are  equivariantly diffeomorphic to these spaces with a linear torus action by work of Galaz-Garc\'ia \cite{GG2}.
For closed, simply connected, non-negatively curved manifolds, we only have the following conjecture (see \cite{ES2}, cf. Galaz-Garc\'ia and Searle \cite{GGS1}).

\begin{msrconj}\label{msrconj} Let $T^k$ act isometrically and effectively on
$M^n$, a closed, simply connected, non-negatively curved Riemannian manifold. Then  the following hold:
\begin{enumerate}
\item 
$k\leq \lfloor 2n/3\rfloor$; and 

\item When $k= \lfloor 2n/3\rfloor$, $M^n$ is equivariantly diffeomorphic to  
$\mathcal{Z}/T^m$ with a linear $T^k$-action,  where 
$$\mathcal{Z}=  \prod_{i\leq r} S^{2n_i-1} \times\prod_{i>r} S^{2n_i},$$
 with  $n_i\geq 2,  \,\,0 \leq m \leq 2n \mod 3,$
and the $T^m$-action on $\mathcal{Z}$ is  free and linear.
 \end{enumerate}
\end{msrconj}

\begin{remark}
The equivariant diffeomorphism classes of $M^n$ in Part 2 of the Maximal Symmetry Rank Conjecture can be described as follows:  

$$
\begin{array}{cll}
 M^{3m} &\simeq S^3 \times \cdots \times S^3, \\
 {}\\
M^{3m-1}& \simeq S^5 \times S^3 \times \cdots \times S^3 \quad \text{or} \quad 
                              M^{3m}/T^1, \\
{}\\                              
M^{3m-2} & \simeq \left\{\begin{array}{l}
                                  S^7 \times S^3 \times \cdots \times S^3 \quad \text{or}  \\
                                   S^5 \times S^5  \times S^3 \times \cdots \times S^3 \quad \text{or}  \\
                                  S^4  \times S^3 \times \cdots \times S^3 \quad \text{or}  \\
                                   M^{3m}/T^2 \quad \text{or} \quad M^{3m-1}/T^1\,,\\
                                 
                                   \end{array} \right.
                                                                    
 \end{array}$$
 \vskip .25cm
 \noindent
  where the $T^k$-action is linear.    In particular, the  
torus action on each of these manifolds is isotropy-maximal. 
 \end{remark}   

By Theorem B in \cite{ES2}, the Maximal Symmetry Rank Conjecture holds for isotropy-maximal actions. By Theorem 7.1 in  \cite{ES2}, for $k=\lfloor 2n/3\rfloor$, an almost isotropy-maximal $T^k$-action on a closed, simply connected, non-negatively curved manifold is, in fact, isotropy-maximal. So, Theorem \ref{main} sheds no new light on the Maximal Symmetry Rank conjecture.  However, it does represent a step forward in understanding the case of closed, simply connected, non-negatively curved manifolds of {\em almost maximal} symmetry rank, for which little is known. Indeed,  a classification for non-negatively curved manifolds of almost maximal symmetry rank is only known in dimensions less than or equal to 
$6$. In particular, for dimensions $2$ and $3$, the classification is well-known. In dimension $4$, the classification is due to Kleiner \cite{K}, Searle and Yang \cite{SY},  Grove and Searle \cite{GS1}, Galaz-Garc\'ia \cite{GG}, and Grove and Wilking \cite{GW}. In dimensions $5$ and $6$, the classification is due to, respectively, Galaz-Garc\'ia and Searle \cite{GGS2} and  Escher and Searle \cite{ES1}. For dimensions $5$ and $6$, Theorem \ref{main} gives us a simplified proof of the classification for those torus actions that are almost isotropy-maximal.

Finally, we note that in dimensions $4$ and $6$  all non-negatively curved manifolds of almost maximal symmetry rank already appear in the almost isotropy-maximal classification.  
However, in dimension $5$, the almost maximal symmetry rank classification also includes the Wu manifold, $SU(3)/SO(3)$, which does not admit an almost isotropy-maximal torus action. In fact, the Wu manifold is not even topologically a quotient by a free linear torus action on a product of spheres of dimension $3$ or larger:  the long exact sequence in homotopy associated to such a fibration gives us a contradiction, as $\pi_2(SU(3)/SO(3))\cong \zzz_2$.
In particular, it is not clear that this phenomenon only occurs in low dimensions or if it is an indication that in higher dimensions, a classification of closed, simply connected non-negatively curved manifolds  of almost maximal symmetry rank may include more than just those manifolds appearing in the almost isotropy-maximal classification. 

\subsection{Organization} The paper is organized as follows.  In Section \ref{s2}, we gather preliminary definitions and facts that are used throughout the paper. In Section \ref{s3}, we 
prove an analog of Proposition 6.1 in \cite{ES2} (Proposition \ref{lstd} here) for strictly almost isotropy-maximal actions.  In Section \ref{s4}, we show how to extend the isometric $T^k$-action to a smooth $T^{k+1}$-action, thus proving Theorem \ref{extend}. In Section \ref{s5}, we prove Theorems  
\ref{RE}, \ref{RE2}, and \ref{main}.
\begin{ack} A portion of this work draws from the PhD thesis of the first author. The authors would like to thank  Fred Wilhelm  for helpful comments and suggestions.  They also thank the referees for their careful reading of the paper and their suggestions for improvement.
C. Escher acknowledges partial support from the Simons Foundation (\#585481, C. Escher). C. Searle  acknowledges partial support from grants from the National Science Foundation (\#DMS-1611780, \#DMS-1906404, and \#DMS-2204324), as well as  from the Simons Foundation (\#355508, C. Searle). This material is based upon work supported by the National Security Agency
under Grant No. H98230-18-1-0144 while C. Escher and C. Searle were both in residence at the
Mathematical Sciences Research Institute in Berkeley, California, during the
summer of 2018. Finally, the authors would like to thank both the Fields Institute and the Max Planck Institute for Mathematics for their support during the final revisions of this paper.

\end{ack}

\section{Preliminaries}\label{s2}

In this section we will gather basic results and facts.  See \cite{ES2} for more details on many of the concepts in this section. 

\subsection{Transformation Groups}\label{2.1}

Let $G$ be a compact Lie group acting on a smooth manifold $M$. We denote by $G_x=\{\, g\in G : gx=x\, \}$ the \emph{isotropy group} at $x\in M$ and by $G(x)=\{\, gx : g\in G\, \}\simeq G/G_x$ the \emph{orbit} of $x$. 

We say that the $G$-action is \emph{effective} or \emph{almost effective} if $\bigcap_{x\in M} G_x$ is trivial or finite, respectively.
The action is said to be \emph{free} if every isotropy group is trivial and \emph{almost free} if every isotropy group is finite. 
The {\em free rank} of an action is the rank of the maximal subtorus that acts almost freely. 

 We will sometimes denote $M^G=\{\, x\in M : gx=x  \,\text{for all} \,g\in G \, \}$,  the \emph{fixed point set} of the $G$-action, by $\Fix(M ; G )$. Its dimension is defined as the maximum dimension of its connected components.  
 
 One measurement for the size of a transformation group $G\times M\rightarrow M$ is the dimension of its orbit space, $M/G$, also called the {\it cohomogeneity} of the action. This dimension is clearly constrained by the dimension of the fixed point set $M^G$  of $G$ in $M$. In fact, $\dim (M/G)\geq \dim(M^G) +1$ for any non-trivial, non-transitive action. In light of this, the {\it fixed-point cohomogeneity} of an action, denoted by $\textrm{cohomfix}(M;G)$, is defined by
\[
\textrm{cohomfix}(M; G) = \dim(M/G) - \dim(M^G) -1\geq 0.
\]
A manifold with fixed-point cohomogeneity $0$ is also called a $G$-{\it fixed-point-homogeneous manifold}.

Finally we  recall Theorem 9.1 of Chapter 1 of Bredon \cite{Br} which allows us to lift a group action to a covering space.

\begin{theorem}\cite{Br}\label{lift}
 Let $G$ be a connected Lie group acting effectively on a connected, locally path-connected space $X$ and let $X'$ be any covering space of $X$. Then there is a connected covering group $G'$ of $G$ with an effective action of $G'$ on $X'$ covering the given action. Moreover, $G'$ and its action on $X'$ are unique.
 
The kernel of $G'\rightarrow G$ is a subgroup of the group of deck transformations of $X'\rightarrow X$. In particular, if $X'\rightarrow X$
 has finitely many sheets, then so does $G'\rightarrow G$.
If $G$ has a stationary point in $X$, then $G' = G$ and $\Fix\,(X'; G)$ is the
full inverse image of $\Fix\,(X; G)$.
\end{theorem}

\subsection{Torus Actions} \label{s2.1}
 In this subsection we will recall notation and facts about smooth $G$-actions on smooth $n$-manifolds, $M$, in the special case when $G$ is a torus.
 We first recall the definitions of an {\em isotropy-maximal torus action} and an {\em almost isotropy-maximal torus action}.  

\begin{definition}[{\bf Isotropy-Maximal Action}]\label{IM} Let $M^n$ be a connected manifold with an effective $T^k$-action.  We call the $T^k$-action on $M^n$ {\em isotropy-maximal} provided  either of 
the following equivalent conditions hold:
\begin{enumerate}  
\item 
 There is a point $x \in M$ such that 
the dimension of its isotropy group is $n-k$, that is,  $\dim(T^k_x) = n-k$; or
\item  There is a point $x \in M$ such that $\dim(T^k(x)) = 2k-n$. 
\end{enumerate} 
\end{definition}

 Note that $n-k$ is the largest possible dimension of any isotropy subgroup of a $T^k$ action on $M$ and $2k-n$ is correspondingly the smallest possible dimension of any orbit.  
The following lemma in \cite{I} shows that the existence of an isotropy-maximal 
torus action on $M$ implies that there is no larger torus acting on $M$ effectively.

\begin{lemma}\label{ishida} \cite{I}  Let $M$ be a connected manifold with an effective $T^k$-action.  Let $T^l \subset T^k$ be a subtorus of $T^k$.  Suppose 
that the action of $T^k$ restricted to $T^l$ on $M$ is isotropy-maximal.  Then $T^l = T^k$.  
\end{lemma} We also define the concept of an almost isotropy-maximal action. 
\begin{definition}[{\bf (Strictly) Almost Isotropy-Maximal Action}]\label{aim}  Let $M^n$ be a connected manifold with an effective $T^k$-action.  
\begin{enumerate}  
\item We call the $T^k$-action on $M^n$ {\em almost isotropy-maximal} if 
\begin{enumerate}
\item There is a point $x \in M$ such that 
the dimension of its isotropy group is $n-k-1$, that is,  $\dim(T^k_x) = n-k-1$; or, equivalently
\item There is a point $x \in M$ such that
 $\dim(T^k(x)) = 2k-n+1$. 
\end{enumerate}
\item  We call a $T^k$-action {\em strictly} almost isotropy-maximal if the action is almost isotropy-maximal but not isotropy-maximal.
\end{enumerate} 
\end{definition}

The next lemma generalizes Lemma \ref{ishida} to almost isotropy-maximal actions.

\begin{lemma}\label{genIshida} Let $M$ be a connected manifold with an effective $T^k$-action.  Let $T^l \subset T^k$ be a subtorus of $T^k$.  Suppose 
that the action of $T^k$ restricted to $T^l$ on $M$ is almost isotropy-maximal.  Then $l=k$ or $l=k-1$.
\end{lemma}

 \begin{remark} Both Lemma \ref{ishida} and Lemma \ref{genIshida} hold for almost effective actions, since their respective proofs depend only on the toral rank of the action.
 \end{remark}
Finally, we recall the following result of Su \cite{Su} (cf. Hattori and Yoshida \cite{HY}), which we use in Case 2.a of Step 2 of the proof of Theorem \ref{ta} in order to lift a torus action on a simply-connected base space of a principal torus bundle to its total space.
\begin{theorem}\cite{Su}\label{Su}
Let $P\rightarrow X$ be a principal $T^l$-bundle  and suppose that $T^k$ acts on $X$. If $H^1(X; \zzz)=0$, then the $T^k$-action can be lifted to $P$.
\end{theorem}

\subsection{Torus Manifolds and Orbifolds} An important subclass of manifolds  admitting an effective, isotropy-maximal torus action are the so-called {\it torus manifolds}. 
For more details on torus manifolds, we refer the reader to Hattori and Masuda \cite{HM},  Masuda and Panov \cite{MP}, and Buchstaber and Panov \cite{BP}.

 \begin{definition}[{\bf Torus Manifold}] A {\em torus manifold} $M$ is a $2n$-dimensional closed, connected, orientable, smooth manifold with an effective smooth action of 
an $n$-dimensional torus $T$ such that $M^T\neq \emptyset$, or, equivalently, the $T$-action is isotropy-maximal.
\end{definition}

We recall the definition of a {\em characteristic submanifold} and generalize it.

\begin{definition}[{\bf (Generalized) Characteristic Submanifold}]
Let $T^k$ act smoothly and effectively on a closed manifold $M^n$ with $2k\geq n-1$. Let $F$ be a connected component of $\Fix(M; S^1)$ for some circle subgroup $S^1\subset T^k$. Then $F$ is called a {\em characteristic submanifold} of $M$ if $2k=n$ and it satisfies the following properties:

\begin{enumerate}
\item   $F$ is of codimension $2$  in $M$;  and
\item $F$ contains a $T^k$-fixed point. 
\end{enumerate}
More generally,
we say that $F$ is a {\em generalized characteristic submanifold} provided it satisfies Property $1$ and contains an orbit of dimension $2k-n$ or $2k-n+1$. 
\end{definition}

\begin{remark}\label{fphaim} 
Generalized characteristic submanifolds only occur in the presence of an almost isotropy-maximal torus action. Moreover, for any such action, if the rank of the isotropy subgroup of the smallest orbit 
of the action is equal to $m\geq 1$, there are $m$ generalized characteristic submanifolds containing that orbit, each corresponding to the fixed point set of some distinct circle subgroup of the isotropy subgroup.

We emphasize that the action of each such circle on $M$ is fixed-point-homogeneous. Moreover, 
given $x\in M$  such that $T(x)$ is an orbit of smallest dimension, for each generalized characteristic submanifold $F$ containing $T(x)$,   we observe that there is a chain of inclusions of totally geodesic submanifolds, that we may specify as follows. We let $$T(x)\subset F_{j}\subset F_{j-1}\subset \cdots\subset F_2\subset F_1=F\subset M$$ denote the chain of inclusions, where $j$ is equal to either $n-k$ or $n-k-1$, the subscript corresponds to the rank of the torus fixing the submanifold, and $\dim(F_i)=n-2i$, with $1\leq i\leq j$.
\end{remark}

An important class of $T^k$-actions on an $n$-dimensional manifold $M^{n}$ consists of {\em locally standard} torus actions, whose definition we now recall.

 \begin{definition}[{\bf Locally Standard}] \cite{ES2} \label{ls} 
A $T^k$-action on $M^n$ is called {\em locally standard} if for 
 each point $x \in M$, there is a neighborhood of $x$ in $M$ which is $T^k$-equivariantly diffeomorphic to
$$T^r \times W \times \rrr^m,$$
where $r=k-\dim(T^k_x)$, $W$ is a faithful $T^k_x$-representation of real dimension $2\dim(T^k_x)$,   and $T^k\cong T^r\times T^k_x$ acts trivially on $\rrr^m$, $T^r$ acts trivially on $W$, and $T^k_x$ acts trivially on $T^r$.
\end{definition}

We now recall the definitions of an orbifold and a torus orbifold.   For more details about orbifolds and actions of tori on orbifolds, see Haefliger and Salem \cite{HS1},  and \cite{GGKRW}.
\begin{definition}[{\bf Orbifold}] An {\em $n$-dimensional (smooth) orbifold}, denoted by $\mathcal{O}$, is a second-countable, Hausdorff topological space $|\mathcal{O}|$, called the underlying topological space of $\mathcal{O}$, together with an {\em equivalence class of $n$-dimensional orbifold atlases}.
\end{definition}
In analogy with a torus manifold, we may define a torus orbifold, as follows. 

\begin{definition}[{\bf Torus Orbifold}] A {\em torus orbifold}, $\mathcal{O}$, is a $2n$-dimensional, closed, orientable orbifold with an effective smooth action of 
an $n$-dimensional torus $T$ such that $\mathcal{O}^T\neq \emptyset$, or, equivalently, the $T$-action is isotropy-maximal.

\end{definition}

Recall that a closed, simply connected topological space is called {\em rationally elliptic} if $\pi_*(X)\otimes \qqq$ and $H_*(X; \qqq)$ are finite-dimensional $\qqq$-vector spaces.  In \cite{GGKRW}, they outline the proof of the following proposition in the proof of their Theorem A,  which we use in the proof of Theorem \ref{main}.

\begin{proposition}\cite{GGKRW}\label{REtorusorbifold} Let $\mathcal{O}$ be a closed, rationally elliptic, simply connected  $2n$-dimensional torus orbifold with a smooth torus action, $T^n$. Then the face poset of the quotient $\mathcal{O}/T$ is combinatorially equivalent to the face poset of 
$$P^n=\prod_{i<r} \Sigma^{n_i} \times \prod_{i\geq r} \Delta^{n_i},$$
where $\Sigma^{n_i} =S^{2n_i}/T^n_i$,
and $\Delta^{n_i}=S^{2n_i+1}/T^{n_i+1}$  is an $n_i$-simplex.
\end{proposition}

 The $T^{n_i}$-action on $S^{2n_i}$ is the suspension of the standard $T^{n_i}$-action on $\rrr^{2n_i}$, and     
$\Sigma^{n_i}$ is a  {\em lunar suspension} of $\Delta^{n_i-1}$, namely, it is obtained as the suspension of $\Delta^{n_i-1}$, ignoring the simplicial structure of $\Delta^{n_i-1}$.  Note that each $n_i$-simplex has $n_i+1$ facets and each $\Sigma^{n_i}$ has $n_i$ facets. The number of facets of $P^n$ in this case is bounded between $n$ 
and $2n$. 

\subsection{Geometric results in the presence of a lower curvature bound}

We now recall some general results about $G$-manifolds with positive and non-negative  sectional curvature which we will use throughout.
The first of these is  the Splitting Theorem of Cheeger and Gromoll.
\begin{theorem}\cite{CG}\label{splitting}
Let $M$ be a compact manifold of non-negative sectional curvature. Then $\pi_1(M)$ contains a finite normal subgroup $\psi$ such that $\pi_1(M)/\psi$ is a finite group extended by $\zzz^k$, and $\widetilde{M}$, the universal covering of $M$, splits isometrically as $\overline{M} \times \rrr^k$, where $\overline{M}$ is compact and non-negatively curved.
\end{theorem}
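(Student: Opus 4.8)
The plan is to reduce everything to the Riemannian universal cover and invoke the classical line–splitting theorem together with Bieberbach's structure theorem. First I would pass to the universal covering $\widetilde{M}$ with the pulled-back metric: it is complete, simply connected, has $\sec\widetilde{M}\ge 0$, and $\Gamma:=\pi_1(M)$ acts on it freely, properly discontinuously, cocompactly, and by isometries. Applying the Toponogov/Cheeger--Gromoll line-splitting theorem repeatedly, I would write $\widetilde{M}\cong\mathbb{R}^k\times N$ isometrically, with $k$ chosen maximal, so that $N$ is complete, non-negatively curved, and contains no line. (If $k=0$ then $\widetilde M=N$ is already compact and $\psi=\Gamma$ is finite, so we may assume $k\ge 1$.)

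The key point is to show $N$ is compact. Suppose not; then $N$ contains a ray $\gamma$, and for each $i$ the restriction $\gamma|_{[0,2i]}$ is a minimizing geodesic segment with midpoint $\gamma(i)$. Since $\Gamma$ acts cocompactly on $\mathbb{R}^k\times N$, choose $\phi_i\in\Gamma$ carrying $(0,\gamma(i))$ into a fixed compact set, and write $\phi_i=(\alpha_i,\beta_i)$ under the de Rham splitting of isometries. Then the midpoints $\beta_i(\gamma(i))$ all lie in a compact subset of $N$, so the segments $\beta_i\circ\gamma|_{[0,2i]}$ are minimizing geodesics of length $2i\to\infty$ with midpoints in a compact set; by Arzel\`a--Ascoli a subsequence converges to a complete minimizing geodesic, i.e.\ a line in $N$, contradicting the maximality of $k$. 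Hence $N$ is compact, and we set $\bar M:=N$, giving the isometric splitting $\widetilde M\cong\bar M\times\mathbb{R}^k$ with $\bar M$ compact.

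It remains to analyze $\Gamma$. By de Rham-type rigidity, $\mathbb{R}^k$ being the full Euclidean de Rham factor of $\widetilde M$, we have $\Isom(\widetilde M)\cong\Isom(\mathbb{R}^k)\times\Isom(N)$, so $\Gamma$ projects to subgroups $\Gamma_1\le\Isom(\mathbb{R}^k)$ and $\Gamma_2\le\Isom(N)$. Let $\psi:=\Gamma\cap(\{1\}\times\Isom(N))$; it is normal in $\Gamma$, and it acts freely and properly discontinuously on the compact manifold $N$, hence $\psi$ is finite. Because $N$ is compact, $\Isom(N)$ is compact, so a subconvergence argument shows that $\Gamma_1\cong\Gamma/\psi$ is discrete in $\Isom(\mathbb{R}^k)$; projecting a compact fundamental set of $\Gamma$ to the $\mathbb{R}^k$ factor shows $\Gamma_1$ is cocompact, hence a crystallographic group. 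By Bieberbach's theorem $\Gamma_1$ contains a finite-index normal subgroup isomorphic to $\mathbb{Z}^k$ (acting by translations), which is exactly the statement that $\pi_1(M)/\psi$ is a finite group extended by $\mathbb{Z}^k$. This proves all the assertions. The main obstacle is the compactness of $N$; the rest is bookkeeping with the de Rham splitting of the isometry group and the Bieberbach structure theorem, and one may alternatively simply cite Cheeger--Gromoll \cite{CG} for the whole statement.
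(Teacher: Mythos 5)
The paper does not give a proof of this statement; it is cited directly from Cheeger and Gromoll \cite{CG} as a background result, so there is no internal argument to compare against. That said, your reconstruction is a correct and essentially standard proof of the Cheeger--Gromoll structure theorem: split off a maximal Euclidean factor from $\widetilde M$ via the line-splitting theorem, use cocompactness of $\Gamma=\pi_1(M)$ together with the de Rham decomposition $\Isom(\widetilde M)\cong\Isom(\mathbb{R}^k)\times\Isom(N)$ to manufacture a line in $N$ if $N$ were noncompact, and then identify $\psi=\Gamma\cap(\{1\}\times\Isom(N))$ as the finite normal kernel with $\Gamma/\psi$ crystallographic, so Bieberbach finishes. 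All the steps check out: $\psi$ acts freely on compact $N$, hence is finite; compactness of $\Isom(N)$ plus discreteness of $\Gamma$ yields discreteness of $\Gamma_1$; and the projection of a compact fundamental set shows $\Gamma_1$ is cocompact, hence crystallographic. One cosmetic remark: the splitting theorem you invoke requires only nonnegative Ricci curvature, whereas the de Rham identification $\Isom(\mathbb{R}^k\times N)\cong\Isom(\mathbb{R}^k)\times\Isom(N)$ uses that the simply connected $N$ has no line and hence no flat de Rham factor --- worth stating explicitly, since that is what makes the projection to $\Isom(\mathbb{R}^k)$ well defined.
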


As mentioned in Remark \ref{fphaim}, a manifold admitting an almost isotropy-maximal torus action is an example of an $S^1$-fixed-point-homogeneous manifold. Closed, simply connected, fixed-point-homogeneous manifolds of positive curvature were classified in Grove and Searle \cite{GS}. More recently, in Spindeler \cite{Spi},  
the following characterization of closed, non-negatively curved fixed-point-homogeneous manifolds is given.

\begin{theorem}\label{Spindeler} \cite{Spi} Let $G$ be a compact Lie group. Assume that $G$ acts fixed-point-homogeneously  on a closed, non-negatively curved Riemannian manifold $M$. Let $F$ be a fixed point set component of maximal dimension. Then there exists a  smooth submanifold $N\subset M$, without boundary, 
such that $M$ is diffeomorphic to the normal disk bundles $D(F)$ and $D(N)$  
glued together along their common boundary, $E$, that is, 
\bdm
M  = D(F) \cup_{E} D(N).
\edm
Further, $N$ is $G$-invariant and all points of $M\, \setminus\, \{F \cup N\}$ belong to principal $G$-orbits.
\end{theorem}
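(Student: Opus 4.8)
The plan is to run a soul-type argument in the fixed point homogeneous setting, essentially the non-negatively curved refinement of the Grove--Searle construction from positive curvature. First I would record the standard structural facts: $F$, being a connected component of $\Fix(M;G)$ for an isometric action, is a closed totally geodesic submanifold, and the fixed point homogeneity hypothesis says precisely that $\dim(M/G)=\dim F+1$, so that $F$ maps homeomorphically onto a maximal stratum of $\partial(M/G)$. Consider the distance function $f=\dist(\cdot,F)\colon M\to\R$. Since $F$ is $G$-invariant, $f$ is $G$-invariant and descends to a function $\bar f$ on the Alexandrov space $X=M/G$, which has curvature $\geq 0$. Set $r_0=\max_M f$ and let $N=f^{-1}(r_0)$ be the set of points at maximal distance from $F$; note that $N$ is automatically $G$-invariant and, by construction, disjoint from $F$ (since $\codim F\geq 1$).

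The key step --- and the one that is genuinely harder than in positive curvature --- is to show that $f$ has no critical points, in the sense of Grove--Shiohama, on $M\setminus(F\cup N)$. In the positively curved case one simply ``turns'' a minimal geodesic from $x$ to $F$, using $\codim F\geq 2$; in non-negative curvature this argument fails, and I would instead work downstairs in $X$, exploiting that the distance to a boundary stratum of a non-negatively curved Alexandrov space is semiconcave. Concretely, $\bar f$ is (up to reparametrization) a concave function on $X\setminus\bar F$ away from its maximum set $\bar N=\pi(N)$, so its Cheeger--Gromoll/Sharafutdinov-type gradient flow has no critical points outside $\bar F\cup\bar N$; lifting this flow equivariantly to $M$ gives the claim. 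I expect this --- establishing the absence of intermediate critical points, together with enough regularity to run the flow and to give $N$ a manifold structure --- to be the main obstacle; it is where the real content of Spindeler's theorem lies, and it is the point at which non-negativity (rather than strict positivity) must be used carefully.

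Granting this, I would finish as follows. Near $F$ the function $f$ is a smooth submersion onto a half-open interval, so a closed tubular neighborhood of $F$ is the normal disk bundle $D(F)$, with boundary a regular level set $f^{-1}(\varepsilon)$. By the no-critical-points property and the Grove--Shiohama isotopy lemma (equivalently, the gradient push of the Sharafutdinov retraction), $M\setminus\mathrm{int}\,D(F)$ deformation retracts onto $N$ and is in fact diffeomorphic to a normal disk bundle $D(N)$; moreover $N$ is a smooth totally geodesic submanifold without boundary --- it acquires a manifold structure as the maximum set of the semiconcave function $f$ inside the smooth manifold $M$ --- and it is $G$-invariant because $f$ is. Analyzing the isotropy structure over the interior of $X$, where all orbits are principal, shows that every singular orbit of $M$ not contained in $F$ projects into $\bar N$, so $N$ contains all singularities of $M$ away from $F$. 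Finally, gluing $D(F)$ and $D(N)$ along the common regular level set $f^{-1}(c)\cong\partial D(F)\cong\partial D(N)$ yields the decomposition $M=D(F)\cup_\partial D(N)$, completing the proof.
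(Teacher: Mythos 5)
The theorem you are proving is not actually proved in this paper; it is quoted verbatim from Spindeler's thesis \cite{Spi} (as the citation indicates), so there is no internal proof to compare against. That said, your proposal follows what is essentially the right route --- the Cheeger--Gromoll soul construction applied to the orbit space $X=M/G$ with respect to the boundary stratum $\bar F$ --- and it correctly identifies the two steps that carry the real weight: absence of critical points for $f=\dist(\cdot,F)$ away from $F\cup N$, and the manifold structure of $N$.

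However, as written there are two genuine gaps, both at exactly the places you flag as ``the main obstacle.'' First, the concavity of $\bar f=\dist(\bar F,\cdot)$ is not automatic. Perelman's theorem gives concavity of $\dist(\partial X,\cdot)$, but $\bar F$ is only a \emph{component} of $\partial X$, and the distance to a proper subset of the boundary need not be concave (e.g.\ the distance to a closed boundary arc of a flat disk is convex along the perpendicular bisector). In the fixed point homogeneous setting, one either has to argue that the other boundary strata lie in $\bar N$, which is circular if stated naively, or one must run the full Cheeger--Gromoll/Perelman iterated construction: take the maximal superlevel set $C_0$ of $\bar f$, and if $\partial C_0\neq\emptyset$ pass to superlevel sets of $\dist(\partial C_0,\cdot)$ inside $C_0$, etc., until one reaches a totally convex set $\bar N$ without boundary; the convexity of the intermediate superlevel sets is what actually needs to be established. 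Second, and relatedly, the maximum set of a (semi)concave function on a smooth manifold is convex, not a priori a manifold \emph{without boundary}; it is precisely the iteration above that removes the boundary and yields $N$ as a closed submanifold. You should also drop the assertion that $N$ is \emph{totally geodesic} --- this is not part of the statement, is not needed for the disk-bundle decomposition, and is false in general (only $F$, being a fixed point set, is automatically totally geodesic). Finally, passing from ``no critical points of $\bar f$ on $X\setminus(\bar F\cup\bar N)$'' to ``no critical points of $f$ on $M\setminus(F\cup N)$'' requires noting that minimal geodesics from $F$ to $p$ project to minimal geodesics from $\bar F$ to $\bar p$ and conversely, so criticality upstairs and downstairs are equivalent; this is true but worth saying explicitly.
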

 
\begin{remark}  In the above disk bundle decomposition, the projection maps $\pi_F: E\rightarrow F$ and $\pi_N: E\rightarrow N$ are both $G$-equivariant.
 \end{remark}

Lemma 3.29 in \cite{Spi}, which we include here, shows that in the special case where $M$ is simply connected, the dimension of $N$ is bounded above. Although it was originally stated for manifolds of non-negative sectional curvature with an isometric $G$-action, the proof of the lemma shows that it also holds in the smooth setting. 
\begin{lemma}\cite{Spi}\label{Spindelerlemma} Let $M$ be a smooth $G$-fixed-point-homogeneous manifold which decomposes as in Theorem \ref{Spindeler}. Assume that $M$ is simply
connected and $G$ is connected. Then 
the codimension of $N$ in $M$ is greater than or equal to two.
\end{lemma}

The following theorem from \cite{ES1} gives us topological information about the fundamental groups of $E$, $F$, and $N$.

 \begin{theorem}\label{ES1}\cite{ES1} Let $M^n$ be a simply connected manifold that decomposes as the union of two disk bundles as follows:
$$M^n=D^{k_1}(N_1)\cup_E D^{k_2}(N_2).$$
If $k_1=k_2=2$, then $\pi_1(N_1)$ and $\pi_1(N_2)$ are cyclic groups.

Moreover, 
\begin{enumerate}
\item If $k_i=2$, $\pi_2(N_i)=0$,  for $i=1, 2$ and $\pi_ 1(N_i)$  is infinite for some $i\in\{1, 2\}$, then 
$\pi_1(E)\cong \zzz^2$.

\item If $k_i\geq 3$, for some $i\in \{1, 2\}$,  then $\pi_1(E) \cong \pi_1(N_i)$ and $\pi_1(N_{i+1}) = 0$, with  the indices  taken mod $2$.
\end{enumerate}
\end{theorem}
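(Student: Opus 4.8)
The plan is to prove Theorem~\ref{ES1} by a careful Mayer--Vietoris and van Kampen analysis of the decomposition $M^n = D^{k_1}(N_1)\cup_E D^{k_2}(N_2)$, using that each disk bundle deformation retracts onto its zero section $N_i$, and that $E$ is simultaneously the sphere bundle $S^{k_i-1}(N_i)$ over $N_i$ for $i=1,2$. First I would record the homotopy-theoretic data: $D^{k_i}(N_i)\simeq N_i$, and the inclusion $E\hookrightarrow D^{k_i}(N_i)$ is, up to homotopy, the projection $p_i\colon S^{k_i-1}(N_i)\to N_i$ of the sphere bundle. The long exact sequence of the fibration $S^{k_i-1}\to E\xrightarrow{p_i} N_i$ then controls how $\pi_1(E)$ maps to $\pi_1(N_i)$: when $k_i\geq 3$ the fiber $S^{k_i-1}$ is simply connected, so $(p_i)_*\colon \pi_1(E)\to\pi_1(N_i)$ is an isomorphism, which is exactly conclusion~(2). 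When $k_i=2$ the fiber is $S^1$ and we instead get an exact sequence $\pi_2(N_i)\to\pi_1(S^1)\to\pi_1(E)\to\pi_1(N_i)\to 1$.

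For the first assertion, assume $k_1=k_2=2$. Van Kampen applied to $M^n = D^2(N_1)\cup_E D^2(N_2)$ with $\pi_1(M)=1$ gives $\pi_1(N_1)*_{\pi_1(E)}\pi_1(N_2)=1$, hence each $(p_i)_*\colon\pi_1(E)\to\pi_1(N_i)$ is surjective (a free product with amalgamation is trivial only if both factor maps are onto, in fact only if the amalgamating maps are surjective). So $\pi_1(N_i)$ is a quotient of $\pi_1(E)$; combined with the $S^1$-bundle exact sequence above, $\pi_1(N_i)$ is a quotient of $\pi_1(E)$ which is itself a quotient of an extension of $\pi_1(N_i)$ by a quotient of $\zzz$. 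To see $\pi_1(N_i)$ is cyclic, I would instead argue directly on $E$: the Gysin/homotopy sequence of the circle bundle $E\to N_i$ shows $\pi_1(N_i)$ is obtained from $\pi_1(E)$ by killing the image of $\pi_1(S^1)=\zzz$, so it suffices to show $\pi_1(E)$ is itself cyclic, or rather to play the two bundle structures on $E$ against each other. The cleanest route: from van Kampen, $\pi_1(E)\twoheadrightarrow\pi_1(N_1)$ and $\pi_1(E)\twoheadrightarrow\pi_1(N_2)$ with the amalgamated product trivial forces, by the structure of amalgamated free products, that $\ker((p_1)_*)$ and $\ker((p_2)_*)$ together generate $\pi_1(E)$; since each kernel is the cyclic image of $\pi_1(S^1)$, $\pi_1(E)$ is generated by two elements, each central-ish coming from circle fibers, and a short argument shows the group is cyclic, whence so are its quotients $\pi_1(N_i)$.

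For conclusion~(1), assume $k_i=2$ and $\pi_2(N_i)=0$ for $i=1,2$, with some $\pi_1(N_i)$ infinite. The hypothesis $\pi_2(N_i)=0$ makes the bundle sequence $0=\pi_2(N_i)\to\pi_1(S^1)\to\pi_1(E)\to\pi_1(N_i)\to 1$ a short exact sequence $1\to\zzz\to\pi_1(E)\to\pi_1(N_i)\to 1$ (the map $\pi_2(N_i)\to\pi_1(S^1)$ being zero means $\zzz$ injects). Doing this for both $i$ and combining with van Kampen, one shows $\pi_1(E)\cong\zzz^2$: it is a central extension of the cyclic group $\pi_1(N_1)$ by $\zzz$, hence abelian of rank $\leq 2$; the infiniteness of some $\pi_1(N_i)$ forces rank exactly $2$ and simple-connectivity of $M$ rules out torsion, giving $\zzz^2$. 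I expect the main obstacle to be the bookkeeping in the $k_1=k_2=2$ amalgamated-product step --- precisely controlling $\pi_1(E)$ from the two competing circle-bundle structures and the van Kampen relation, and verifying that ``generated by two circle-fiber classes with trivial amalgam'' really does force cyclic (for the first assertion) respectively $\zzz^2$ (for conclusion~(1)); the $k_i\geq 3$ case~(2) is essentially immediate from the fibration sequence.
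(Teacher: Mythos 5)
The paper quotes this theorem from reference [ES1] without reproducing a proof, so there is no in-paper argument to compare against; I evaluate the proposal on its own. Your overall strategy --- reading $E$ simultaneously as the sphere bundle $S^{k_1-1}(N_1)$ and $S^{k_2-1}(N_2)$, using the homotopy exact sequences of those fibrations, and combining with van Kampen --- is the right one, and your treatments of conclusions (1) and (2) are essentially sound (for (1), once some $\pi_1(N_i)\cong\zzz$, the oriented circle bundle gives a central extension $1\to\zzz\to\pi_1(E)\to\zzz\to 1$, which splits since $H^2(\zzz;\zzz)=0$; you should note explicitly that the fiber class is central because the normal bundle is oriented).

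There is, however, a genuine error in your proof of the first assertion (cyclicity of $\pi_1(N_i)$ when $k_1=k_2=2$). You correctly reduce to the fact that van Kampen, together with surjectivity of $(p_1)_*,(p_2)_*$, forces $\pi_1(E)=K_1K_2$, where $K_i:=\ker\bigl((p_i)_*\bigr)$ is the normal cyclic subgroup generated by the $i$-th fiber class. But you then assert that ``a short argument shows the group [$\pi_1(E)$] is cyclic, whence so are its quotients $\pi_1(N_i)$.'' That intermediate claim is false --- and it contradicts your own conclusion (1), in which $\pi_1(E)\cong\zzz^2$ is not cyclic. The correct finish is shorter and does not pass through cyclicity of $\pi_1(E)$: since $K_1,K_2$ are normal, the second isomorphism theorem gives
$$\pi_1(N_1)\cong\pi_1(E)/K_1=K_1K_2/K_1\cong K_2/(K_1\cap K_2),$$
a quotient of the cyclic group $K_2$, hence cyclic; symmetrically $\pi_1(N_2)\cong K_1/(K_1\cap K_2)$ is cyclic. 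A smaller misstatement: the parenthetical ``a free product with amalgamation is trivial only if both factor maps are onto'' is not a valid general principle (for instance $C=\zzz/2\to A=S_3$ as a transposition and $C\to B=1$ give trivial pushout without $C\to A$ being onto); the surjectivity of $(p_i)_*$ in your situation really comes from $\pi_0(S^1)=0$ in the fibration sequence, not from the structure of amalgams.
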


 We now claim that for $k_i\geq 2$ for some $i\in \{1, 2\}$, the fundamental groups of each of the $N_i$ are cyclic under the same hypotheses as in Theorem \ref{ES1}. We see this as follows. Using the arguments  in the proof of Lemma 3.29 in \cite{Spi} (Lemma \ref{Spindelerlemma} here), we see that if $k_i\geq 2$ for some $i\in \{1, 2\}$, then $k_{i+1}\geq 2$. Part 1 of Theorem \ref{ES1} gives us the result for $k_i=k_{i+1}=2$. If $k_i\geq 3$ for some $i\in \{1, 2\}$, using the results of Part 2 of Theorem \ref{ES1} and the long exact sequences in homotopy  associated to $S^{k_1-1}\rightarrow E\rightarrow N_1$ and $S^{k_2-1}\rightarrow E\rightarrow N_2$, the result follows.  Hence we obtain the following corollary.
 
\begin{corollary}\label{Ncyclic}  
Let $M^n$ be a simply connected manifold that decomposes as the union of two disk bundles as follows:
$$M^n=D^{k_1}(N_1)\cup_E D^{k_2}(N_2),$$
with $k_i\geq 2$ for some $i\in \{1, 2\}$.
Then the fundamental groups of $N_1$ and $N_2$ are cyclic.
\end{corollary}

Combining Proposition 4.5 from \cite{Wie}, Proposition \ref{REtorusorbifold}, and Theorem 4.2 in \cite{D}, 
we obtain the following result that characterizes the orbit space of a simply connected torus orbifold  that additionally is either rationally elliptic or non-negatively curved. Recall that a nice manifold with corners is a manifold with corners such that every codimension $k$ face is contained in exactly $k$ facets, see Davis \cite{D}.  

\begin{proposition}\label{P}\cite{D, GGKRW, Wie}
Let $X$ be a closed, simply connected torus orbifold that is either rationally elliptic or non-negatively curved. Then the quotient space, 
 $X^{2n}/T^n$,  is a nice manifold with corners all of whose faces 
are acyclic,  and  $X^{2n}/T^n$ is combinatorially equivalent to
\begin{equation}\label{P^n}
P^n=\prod_{i<r} \Sigma^{n_i} \times \prod_{i\geq r} \Delta^{n_i}.
\end{equation}
Further, if we assume that all four-dimensional faces of $X^{2n}/T^n$ are diffeomorphic to disks, after smoothing the corners, then $X^{2n}/T^n$ is diffeomorphic to $P^n$. 
 \end{proposition}

For $M^n$, a closed, simply connected, non-negatively curved $n$-manifold admitting an isometric, effective, and isotropy-maximal $T^k$-action, the following Proposition 6.1 from \cite{ES2} 
gives us information about the structure of the quotient space, $M^n/T^k$, as well as a complete description of the corresponding isotropy groups.
\begin{proposition}\label{lstd}\cite{ES2} Let $T^k$ act isometrically, effectively, and isotropy-maximally on $M^{n}$, a closed,  simply connected, non-negatively curved Riemannian manifold. Then the following are true:
\begin{enumerate}
\item The torus action on $M$ is locally standard,
in particular,  $M/T$ is a nice manifold with corners, such that the isotropy groups are constant on all open faces of $M/T$; and
\item $M/T$ and all closed faces of $M/T$ are diffeomorphic to standard disks, after smoothing the corners.
\end{enumerate}
\end{proposition}

We also make use of the following theorem, which follows by combining Theorem D  from \cite{GGKRW} with Corollary 2.37 from \cite{ES2}. 
\begin{theorem}\cite{ES2, GGKRW}\label{misre} Let $M$ be a closed, simply connected, non-negatively curved Riemannian manifold admitting an effective, isometric, almost isotropy-maximal torus action. Then $M$ is rationally elliptic.
\end{theorem}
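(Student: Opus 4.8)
\emph{Strategy.} The plan is to reduce the assertion to the corresponding statement for \emph{torus orbifolds} --- closed, orientable $2m$-dimensional orbifolds with an effective $T^m$-action having a fixed point --- and then to prove rational ellipticity of a non-negatively curved torus orbifold by induction on the dimension, at each stage peeling off a characteristic suborbifold by means of a fixed-point-homogeneous decomposition.

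\emph{Reduction to torus orbifolds.} Suppose $T^k$ acts isotropy-maximally on $M^n$, so there is a minimal orbit $T^k(x)$ of dimension $2k-n$, with isotropy group whose identity component is a subtorus $H_0\cong T^{n-k}$. Pick a complementary subtorus $T^{2k-n}\subset T^k$ with $T^k=H_0\cdot T^{2k-n}$ and $H_0\cap T^{2k-n}$ finite. The first point is that $T^{2k-n}$ acts almost freely on $M$: a positive-dimensional isotropy subgroup for the $T^{2k-n}$-action, together with the isotropy along the minimal orbit, would force a torus strictly larger than $T^k$ to act effectively on $M$, contradicting maximality --- this is where Lemma~\ref{Ishida} is used. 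Hence $\mathcal{O}:=M^n/T^{2k-n}$ is a closed, orientable orbifold of dimension $n-(2k-n)=2(n-k)$, and the residual $T^{n-k}$-action (the image of $H_0$) is effective with a fixed point, the image of the minimal orbit; so $\mathcal{O}$ is a torus orbifold. As the $T^{2k-n}$-action is isometric on a manifold with $\sec\geq 0$, $\mathcal{O}$ carries non-negative curvature in the orbifold sense; since $M$ is simply connected and $T^{2k-n}$ is connected, $|\mathcal{O}|$ is simply connected, and after checking that the singular locus has sufficiently high codimension one finds $\mathcal{O}$ is simply connected as an orbifold. Finally, rational ellipticity transfers: the almost-free action yields a Borel fibration $M\to M\times_{T^{2k-n}}ET^{2k-n}\to BT^{2k-n}$ with simply connected base, and the rational homotopy exact sequence, together with the finite-dimensionality of $H^*(M;\qqq)$, shows that $M$ is rationally elliptic precisely when the Borel construction --- whose rational homotopy type is that of the orbifold $\mathcal{O}$ --- is. We are thus reduced to proving that a closed, simply connected, non-negatively curved torus orbifold is rationally elliptic. (When $n=2k$ this step is vacuous: $M$ is already a torus manifold.)

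\emph{The torus-orbifold case.} Let $\mathcal{O}^{2m}$ be a closed, simply connected, non-negatively curved torus orbifold; we induct on $m$, the low-dimensional cases being checked directly. At a $T^m$-fixed point the isotropy representation is a sum of $m$ nontrivial complex weights; the kernel of one of these is a codimension-one subtorus $S^1\subset T^m$ whose fixed-point suborbifold $F$ has codimension $2$ and contains a $T^m$-fixed point --- a characteristic suborbifold. Thus $\mathcal{O}$ is $S^1$-fixed point homogeneous, and the orbifold analog of Theorem~\ref{Spindeler} (the Grove--Searle/Spindeler soul construction, carried out in the orbifold category) gives a decomposition
\[
\mathcal{O}=D(F)\cup_{E}D(N),
\]
where $N$ is an $S^1$-invariant suborbifold carrying all the singularities of $\mathcal{O}$ away from $F$, and $E=S(F)$ is an $S^1$-bundle over $F$. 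Now $F^{2m-2}$ is a closed, non-negatively curved torus orbifold for the residual $T^{m-1}=T^m/S^1$-action; the orbifold version of Lemma~\ref{Spindelerlemma} gives $\codim N\geq 2$, and a van Kampen argument (compare Proposition~\ref{Spindelerprop}) shows $F$ is simply connected. The induced $T^{m-1}$-action on $N$ is again (almost) isotropy-maximal and $N$ is non-negatively curved, so by the inductive hypothesis --- applied to $F$ directly, and to $N$ after a further reduction to a torus orbifold as above --- both $F$ and $N$ are rationally elliptic. To conclude that $\mathcal{O}$ itself is rationally elliptic, one shows, as in the torus-manifold case of Wiemeler (Lemma~\ref{locallystandard} and Proposition~4.5 of \cite{Wie}), that the orbit space $\mathcal{O}/T^m$ is, after smoothing corners, a nice manifold with corners with acyclic faces and of the product form appearing in Display~(2.2); then $\mathcal{O}$ is rationally homotopy equivalent to the quotient of a moment angle manifold $\mathcal{Z}_Q$ --- a product of spheres, hence rationally elliptic --- by a free torus action, and a free torus quotient of a rationally elliptic space is rationally elliptic. (Alternatively, one can argue directly from the decomposition $\mathcal{O}=D(F)\cup_E D(N)$ by a Mayer--Vietoris/minimal-model computation, using that $E$ is a sphere bundle over each of $F$ and $N$.) This closes the induction, and with the reduction step, proves the theorem.

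\emph{Main obstacle.} The crux is twofold. First, one must run the fixed-point-homogeneous machinery of Grove--Searle and Spindeler \emph{in the orbifold category}: that the soul-type decomposition, the bound $\codim N\geq 2$, and the simple-connectivity of $F$ all survive the presence of orbifold singularities, and --- crucially for the induction --- that these singularities remain under control (of high enough codimension, confined to $N$) at every stage. Second, passing from ``$F$ and $N$ rationally elliptic'' to ``$\mathcal{O}$ rationally elliptic'' is genuinely delicate: for an arbitrary double disk bundle decomposition this is false (connected sums), so one must exploit the rigidity of the torus-orbifold structure, either through the explicit product description of the orbit space and the moment angle model, or through a careful analysis of the connecting homomorphisms in the Mayer--Vietoris sequence of minimal models. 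By comparison, the almost-freeness of the complementary subtorus and the torus-orbifold structure of $\mathcal{O}$ in the reduction step --- while requiring Lemma~\ref{Ishida} essentially --- are comparatively routine.
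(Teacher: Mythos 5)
This theorem is quoted from \cite{GGKRW}; the present paper does not prove it, so there is no in-paper proof to compare against. Against the actual argument in \cite{GGKRW}, your proposal gets the high-level architecture right: quotient by a complementary almost-freely acting subtorus to pass to a simply connected, non-negatively curved torus orbifold $\mathcal{O}$, observe that the Borel fibration makes rational ellipticity of $M$ equivalent to that of $\mathcal{O}$, and then exploit the fixed-point-homogeneous/Spindeler decomposition together with the moment-angle model of the orbit space. That is indeed the route taken in \cite{GGKRW}.

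The substantive gap is in the inner induction. You set up the inductive hypothesis to deliver \emph{rational ellipticity} of $F$ and of $N$, but as you yourself note, rational ellipticity of the two pieces of a double disk bundle does not by itself give rational ellipticity of the union; to close the argument you then switch to the claim that $\mathcal{O}/T^m$ has the product face structure of Display (2.2), after which the moment-angle model does the work. But that claim is precisely the hard geometric content, and it requires its own induction on the \emph{combinatorial structure of the orbit space} (that every face is acyclic/a disk, that the face poset is a product of simplices and suspended simplices) -- not an induction on rational ellipticity. Once you have the orbit-space structure, the ellipticity of $F$ and $N$ is never used: the model-space argument gives ellipticity of $\mathcal{O}$ directly. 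So the inductive hypothesis you carry is the wrong one, and the step ``orbit space is of the product form'' is asserted rather than derived from what the induction supplies. This is exactly the content of Wiemeler's Proposition 4.5 and Lemma 6.3 in \cite{Wie} (manifold case) and its orbifold extension in \cite{GGKRW}, and it is where the geometry (Alexandrov comparison on the orbit space, concavity of the distance to the boundary, control of the extremal sets) enters; your proposal leaves it as a black box.

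Two smaller points. First, the almost-freeness of the complementary subtorus $T^{2k-n}$ is not a consequence of Lemma \ref{Ishida} in the way you describe: a positive-dimensional $T^{2k-n}$-isotropy at some point $p$ does not obviously produce a torus larger than $T^k$ acting effectively. The standard argument uses finiteness of orbit types: the positive-dimensional isotropy groups lie in finitely many proper subtori of dimension at most $n-k$, and a generic complement of $H_0$ meets each only in a finite subgroup. Second, for the orbifold-category transplant of Theorem \ref{Spindeler}, Lemma \ref{Spindelerlemma}, and Proposition \ref{Spindelerprop}, one must verify that the soul construction, the codimension bound, and the van Kampen argument survive the orbifold singularities and that those singularities stay confined appropriately; you flag this, correctly, as nontrivial, but it is part of the content of \cite{GGKRW} rather than something that can be waved through.
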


Finally, we recall Theorem A in \cite{ES2}, which we need for the proof of Theorem \ref{main}.

\begin{theorem}\label{thma}\cite{ES2}  Let $T^k$ act isometrically and effectively on $M^n$, a closed, simply connected, non-negatively curved Riemannian manifold. Assume that the action is isotropy-maximal.  Then the following hold:
\begin{enumerate}
\item $  \lfloor(n+1)/2\rfloor 
\leq k\leq \lfloor 2n/3\rfloor$; and 

\item $M$ is equivariantly diffeomorphic to a quotient by a free linear torus action of 
$$\mathcal{Z}^m=\prod_{i<r} S^{2n_i} \times \prod_{i\geq r} S^{2n_i-1},\, \, n_i\geq 2, \,\,   \textrm{where} \,\,\,
 n\leq m\leq 3n-3k\,.$$
\end{enumerate}
\end{theorem} 
\noindent The torus action referred to in Part 2 of Theorem \ref{thma} is defined as in Theorem \ref{main}.  

\begin{remark}\label{Wie}
It has come to our attention that there is a gap in the proof of Theorem 3.7 in \cite{ES2}. A result filling that gap can be found in Theorem 1.1 in Wiemeler \cite{Wie2}. Since Theorem 3.7 is used in the proof of Theorem A in \cite{ES2} to improve the equivariant homeomorphism classification to an equivariant diffeomorphism classification, it is important to clarify that with Theorem 1.1 in \cite{Wie2}, the equivariant diffeomorphism classification results in Theorem A in \cite{ES2} still hold.  
\end{remark}

%%%%%%%%%%%%%%%
%%Section: proof of Lemma 6.3 analog
%%%%%%%%%%%%%%%%
\section{Almost Isotropy-Maximal Torus Actions}\label{s3}

In this section, our goal is to prove a partial  analog of Proposition \ref{lstd} for strictly almost isotropy-maximal actions.   
We first establish some notation. Let $\mathcal{M}_0^{T^k}(n)$ denote the class of  closed,  non-negatively curved Riemannian $n$-manifolds admitting an isometric and effective $T^k$-action.    For simplicity of notation, we  let $T$  denote the torus $T^k$, when the rank 
is clear from context.
 If the $T^k$-action on $M$ is also almost isotropy-maximal, by Remark \ref{fphaim}, the torus action is $S^1$-fixed-point-homogeneous with generalized characteristic submanifold 
$F\subset M$ fixed by a circle subgroup $C\subset T^k$, containing 
a smallest orbit. 
 In particular,  the action of $T/C$ on $F$ is also almost isotropy-maximal.
By Theorem \ref{Spindeler}, $M$ admits  an equivariant disk bundle decomposition as 
 \begin{equation} \label{disk} 
 M =D(F)\cup_E D(N),
 \end{equation}
where $N$ is an invariant submanifold at maximal distance from $F$. By Lemma \ref{Spindelerlemma},  $\codim(N) \geq 2$,
and by Corollary \ref{Ncyclic},  the fundamental groups of $F$ and $N$ are cyclic.

Observe that when the action is not isotropy-maximal, there are cases where the  
torus action on $M$ 
may not be locally standard everywhere, due to the existence of fixed point sets of disconnected groups.  Hence,
 we introduce the following general property.

\begin{CondC}\label{condC} 
Let $M\in \mathcal{M}_0^{T^k}(n)$ and let $M$ be $S^1$-fixed-point-homogeneous. Let $F$ and $N$ be as in Display \ref{disk}.  If  $N$ is either the fixed point set component of  a connected subgroup of $T^k$ or fixed by no subgroup of $T^k$, we say {\em $N$ satisfies Property C}. 
\end{CondC}

Recall that by Remark \ref{fphaim}, for a strictly almost isotropy-maximal torus action on a closed manifold, the chain of inclusions of totally geodesic submanifolds is given as follows:   $$T(x)\cong T^{2k-n+1}\subset F_{n-k-1}\subset F_{n-k-2}\subset \cdots\subset F_2\subset F_1=F\subset M,$$ where the subscript corresponds to the rank of the torus fixing the submanifold and $\dim(F_i)=n-2i$.

The proof of the following lemma is straightforward and left to the reader.
\begin{lemma}\label{F_i} Let $M\in \mathcal{M}_0^{T^k}(n)$ and suppose that the $T^k$-action is strictly almost isotropy-maximal. Let $F$ denote a generalized characteristic submanifold in $M$. Then  the induced torus action on each $F_i$ in the chain of inclusions of totally geodesic submanifolds is strictly almost isotropy-maximal and of cohomogeneity strictly less than $n-k$. In particular, it follows that each $F_i$ decomposes as
\begin{equation}\label{diskfi}
F_i=D(F_{i-1})\cup D(N_{F_i}).
\end{equation}
\end{lemma}

Before we state Theorem \ref{6.1analog}, we need to make one more definition. Observe that $N\subset M$ and each $N_{F_i}\subset F_i$ satisfy  \hyperref[condC]{Property C}
for an isometric, effective, isotropy-maximal torus action on a closed, simply-connected, non-negatively curved manifold.  
 However, for {\em strictly} almost isotropy-maximal torus actions, this need not be the case. To avoid this situation when working with strictly almost isotropy-maximal torus actions, we next define the notion of a {\em Property C manifold}. 

\begin{definition}[{\bf Property C manifold}]Let  $M\in \mathcal{M}_0^{T^k}(n)$  and suppose that the $T^k$-action is strictly almost isotropy-maximal.   For any generalized characteristic submanifold $F$ of $M$, suppose that in the corresponding disk bundle decompositions as in Displays \ref{disk} and \ref{diskfi},  $N$ and $N_{F_{i}}$, $1\leq i\leq n-k-1$,  satisfy \hyperref[condC]{Property C}. We then say that $(M, T^k)$ is a {\em Property C manifold}.
\end{definition}

We are now in a position to state Theorem \ref{6.1analog}. Before we do, we remark that  all disks 
in this section are standard, that is, they have the standard smooth structure.

\begin{theorem}\label{6.1analog} Let $(M, T^k)$ be a Property C manifold with $M$ simply connected. Then the following hold
\begin{enumerate}
\item The action of $T^k$ on $M^n$ is locally standard; and
\item $M/T$ is diffeomorphic  to a  disk, after smoothing the corners. 

\end{enumerate}
\end{theorem}

\begin{remark}
If $(M, T^k)$ is not a Property C manifold then the torus action on $M$ can fail to be locally standard in numerous ways, both on $D(N)$ and on $F$, and hence, in the latter case, on $D(F)$.
\end{remark}

The proof of Theorem \ref{6.1analog} follows along the same lines as the proofs of the main theorem in Dong \cite{Do} and Proposition  \ref{lstd}, and  is 
 by induction on the dimension of the orbit space.    
 We establish it in four steps. The first step is to prove Proposition \ref{cohom2}, the anchor of the induction. The second and third steps are  to prove Theorem \ref{p1} and Theorem \ref{p2}, which establish an analogue of Theorem \ref{6.1analog} for $F$ and for $N$, respectively.
  The fourth and last step is to prove Parts 1 and 2 of Theorem \ref{6.1analog}. 

 \subsection{Step 1 of the Proof of Theorem \ref{6.1analog}}

 Since a strictly almost isotropy-maximal cohomogeneity one torus action only has isotropy subgroups of rank $0$, the $T$-action on $M$ must be almost free and so $M/T$ is a circle. But, by Corollary 6.3 of Chapter 2 in \cite{Br}, this contradicts the hypothesis that $M$ is simply connected  and hence this case does not occur. Therefore, 
 the base case for the induction is when the action is by cohomogeneity two.  We establish this case in the following proposition.
\begin{proposition}\label{cohom2} Theorem \ref{6.1analog} holds for the case $n-k=2$.
\end{proposition} 
\begin{proof}
 As the action is strictly almost isotropy-maximal, the largest isotropy subgroup has rank $1$. 
 By Theorem 8.6 of Chapter 4 in \cite{Br}, the quotient space is a closed $2$-disk, $D^2$, whose interior corresponds to principal orbits and whose boundary, $\partial D^2$, corresponds to the singular orbits.  In particular, there is only one singular isotropy group, $T^1$, and $\partial D^2$ is the image of its fixed point set in the quotient space.
 As we saw above, $M$ decomposes as a union of disk bundles over the  generalized characteristic submanifold, $F$,  and $N$, the submanifold at maximal distance from $F$.  Since the image of $N$ in $M/T^k$ corresponds to an interior point of $D^2$, $N$ is a principal orbit and so $N=T^{k}$.   
 By Theorem \ref{ES1}, $\pi_1(N)$ is cyclic, so 
 $k=1$ and so $n=3$. 
 That is, we 
need only prove Theorem \ref{6.1analog} for an isometric, strictly almost isotropy-maximal $T^1$-action on a closed, simply connected, non-negatively curved $3$-manifold.

The action of the circle is $S^1$-fixed-point-homogeneous, and using work of  \cite{GG} we see  that $M^3$ is equivariantly diffeomorphic to $S^3$ with a linear $T^1$-action and decomposes as a union of disk bundles over $F$ and $N$, both of which are circles, with $F$ being fixed by the $T^1$-action and $N$ being a $T^1$-orbit.  In particular,   $F/T$ is diffeomorphic to $S^1$ and $N/T$ is a point. 

To show that the $T^1$-action on $S^3$ is locally standard, we note that since $F$ is a fixed circle in $M^3$, the Slice theorem gives us directly that the action in a neighborhood of $F$ is locally standard. Since $N$ is a principal orbit, its isotropy group is trivial. 
 It is then clear that the action is 
 locally standard in a  neighborhood of $N$.   Since $E$ is a principal circle bundle over $F$, the $T^1$-action on $E$ is also locally standard.   
Via the disk bundle decomposition, we see that the $T^1$-action is locally standard on $M$, which finishes the proof.
\end{proof}

\subsection{Step 2 of the Proof of Theorem \ref{6.1analog}}

In this subsection, our goal is to prove the following theorem, which establishes an analogue of Theorem \ref{6.1analog} for $F$.

\vspace{2mm}
\begin{theorem}\label{p1} Let $(M, T^k)$ be a Property C manifold with $M$ simply connected.  Assume that Theorem \ref{6.1analog} holds for torus actions of cohomogeneity $<n-k$.  Then  the induced torus action on $F$ is locally standard and $F/T$ is diffeomorphic to a disk or a product of a disk with a circle after smoothing the corners.
\end{theorem}

\vspace{2mm}

Before we begin the proof of Theorem \ref{p1}, we note that neither $F$ nor $N$ need be simply connected, but by Corollary \ref{Ncyclic}, they both have cyclic fundamental group. We prove the following technical lemma and proposition, which allow us to treat the cases where their respective fundamental groups are cyclic.
\begin{lemma}\label{Fpi1nontrivial} Let  $M\in \mathcal{M}_0^{T^k}(n)$  and suppose that the $T^k$-action is strictly almost isotropy-maximal.  Assume that $\pi_1(M)$ is cyclic and denote by $\widetilde{M}$
the Riemannian universal cover of $M$.  Then one of the following holds:
\begin{enumerate} 
\item If $\pi_1(M)\cong\zzz_q$, $\widetilde{M}$ is a closed, simply connected, non-negatively curved manifold admitting an isometric, strictly almost isotropy-maximal $T^k$-action;  and
\item If $\pi_1(M)\cong\zzz$, $\widetilde{M}=\overline{M}\times \rrr$, and $\overline{M}$ is 
a closed, simply connected non-negatively curved manifold admitting an isometric, isotropy-maximal $T^k$-action or  a strictly almost isotropy-maximal $T^{k-1}$-action.
\end{enumerate}
\end{lemma}

\begin{proof}

We begin with the proof of Part 1. By Theorem \ref{splitting}, $\widetilde{M}$, the universal cover of $M$,    is a closed, simply connected, non-negatively curved $n$-dimensional manifold. Let $\pi: \widetilde{M}\rightarrow M$ be the covering map.
 By Theorem \ref{lift}, it follows that the $T^k$-action on $M$ lifts to a  $T^k$-action on $\widetilde{M}$, since the lifted group   finitely covers  $T^k$.  Since the $T^k$-action on $M$ is strictly almost isotropy-maximal, there is an $x\in M$ with $T^{n-k-1}$ isotropy. Restricting to 
 this subaction, we may apply Theorem \ref{lift} to see that  $\pi^{-1}(x)$ is also fixed by $T^{n-k-1}$. Hence the $T^k$-action on $\widetilde{M}$ is strictly almost isotropy-maximal, as desired.

We now prove Part 2. By Theorem \ref{splitting}, the universal cover of $M$ splits isometrically as $\widetilde{M}=\overline{M}\times \rrr$, where $\overline{M}$ is a closed, simply connected, $(n-1)$-dimensional manifold of non-negative curvature. By Theorem \ref{lift}, since the kernel of the covering map $\pi: \widetilde{T}^k\rightarrow T^k$ is a subgroup of the group of deck transformations of $\pi: \widetilde{M}\rightarrow M$, $\widetilde{T}^k$ will be of the form $T^l \times \rrr^s$ where $l+s=k$ and $s\in \{0, 1\}$.  

Theorem $1$ of Hano \cite{Ha} states that  the connected component of the identity of the isometry group of a simply connected Riemannian product manifold is the product of the  identity components  of the isometry groups of the corresponding factors. Thus, we have $$\Isom_0(\widetilde{M})\cong \Isom_0(\rrr)\times \Isom_0(\overline{M}), $$ and so
$$\Isom_0(\widetilde{M})\cong \rrr\times \Isom_0(\overline{M}).$$ If $\widetilde{T}^k=T^k$, then $T^k\subset \Isom_0(\overline{M})$ and any non-trivial orbit of the $T^k$-action on $\widetilde{M}$ must lie entirely in the $\overline{M}$ factor.
 That is, the $\widetilde{T}^k$-action  leaves $\overline{M}$ invariant and fixes the $\rrr$-factor. 
As in the proof of Part 1, the $\widetilde{T}^k$-action on $\widetilde{M}$ is strictly almost isotropy-maximal, and thus the 
 $T^k$-action is isotropy-maximal on $\overline{M}$. 
Now assume that $\widetilde{T}^k=T^{k-1}\times \rrr$.  Again, as the $T^k$-action on $M$ is strictly almost isotropy-maximal,  
it follows that the $T^{k-1}$-action on $\overline{M}^{n-1}$ is strictly almost isotropy-maximal.
 This concludes the proof of Part 2.
\end{proof}

\smallskip

\begin{proposition}\label{Ftilde} Let $M\in \mathcal{M}_0^{T^k}(n)$ with a strictly almost isotropy-maximal $T^k$-action. Assume that Theorem \ref{6.1analog} holds for torus actions of cohomogeneity $< n-k$.
Let $A^{n-2l}$ be a component of the fixed point set of some $T^l\subset T^k$.  Suppose that $(A, T^{k-l})$ is a 
Property C (sub)manifold and $\pi_1(A)$ is cyclic and non-trivial.
Then, the $T^{k-l}$-action on  $A$ is locally standard and one of the following holds:
\begin{enumerate}
\item If $\pi_1(A)$ is finite, then  $A/T^{k-l}$ is diffeomorphic  to $D^{n-k-l}$ after smoothing the corners; and 
\item If $\pi_1(A)\cong \zzz$, then  $A/T^{k-l}$ is diffeomorphic  to $S^1\times D^{n-k-l-1}$ or to $D^{n-k-l}$ after smoothing the corners.
\end{enumerate} 
\end{proposition}

\begin{proof}
We first show that the  induced $T^{k-l}$-action on  $A^{n-2l}$ is locally standard.
Let $\widetilde{A}$ be the universal cover of $A$. 
Then we see from the proof of Lemma  \ref{Fpi1nontrivial} that $\widetilde{A}$ admits a strictly almost isotropy-maximal action.

If $\pi_1(A)\cong\zzz_q$, then the $T^{k-l}$-action on $A$ lifts to a $T^{k-l}$-action on  $\widetilde{A}$ and the covering map $\pi: \widetilde{A}\rightarrow A$ is equivariant with respect to these actions.  Thus  $(\widetilde{A}, T^{k-l})$ will also be a Property C manifold and the hypothesis gives us that the $T^{k-l}$-action on $\widetilde{A}$ is locally standard. Since being locally standard is a local condition, it immediately follows  that the induced action of $T^{k-l}$ on $A$ is locally standard. 

If $\pi_1(A)\cong\zzz$, then by Theorem \ref{splitting}, $\widetilde{A}=\bar{A}\times \mathbb{R}$. If the lifted group is $T^{k-l}$, then the action is isotropy-maximal on  $\bar{A}$. Combining Proposition \ref{lstd} 
with the fact that the torus action on the $\rrr$-factor is trivial yields that the $T^{k-l}$-action on $\widetilde{A}$ is locally standard and again, since being locally standard is a local condition, it immediately follows  that the induced action of $T^{k-l}$ on $A$ is locally standard.

If instead  the lifted group is $T^{k-l-1}\times \rrr$, then the action of $T^{k-l-1}$ on $\bar A$ is strictly almost isotropy-maximal. Moreover, we see, as in the case when the fundamental group is finite, that $(\bar A, T^{k-l-1})$ is a Property C manifold. Thus the action is locally standard on $\bar{A}$ by hypothesis. Since $T^{k-l-1}$  acts trivially on the $\rrr$-factor of $\widetilde{A}$, it follows that the   $T^{k-l-1}$-action on $\widetilde{A}$ is then locally standard.  To see that the $T^{k-l}$-action on $A$ is locally standard, it remains to show that the commuting $T^1$-action that lifts to an $\rrr$-action on the $\rrr$-factor of $\widetilde{A}$ is free.  But this follows immediately from Theorem \ref{lift}, for if the corresponding $T^1$-action on $A$ has non-trivial isotropy, then the lift of that isotropy subgroup 
would be compact, contradicting the fact that $T^1$ lifts to $\rrr$.

We now prove Part 1.   
By Part 1 of Lemma \ref{Fpi1nontrivial}, $\widetilde{A}$  is a closed, simply connected, non-negatively curved   manifold admitting an isometric,  strictly almost isotropy-maximal torus action and so by hypothesis  $\widetilde{A}/T^{k-l}$ is diffeomorphic to a disk. 
By letting $p:\widetilde{A}\rightarrow A$, $p_{\scaleto{A}{4pt}}:A\rightarrow A/T^{k-l}$, and $p_{\scaleto{\widetilde A}{5pt}}:\widetilde{A}\rightarrow \widetilde{A}/T^{k-l}$ it follows that the induced map 
$$p_{\scaleto{A}{4pt}}\circ p\circ p_{\scaleto{\widetilde A}{5pt}}^{-1}:\widetilde{A}/T^{k-l}\rightarrow A/T^{k-l}$$ is a surjective local diffeomorphism between compact Hausdorff spaces, and hence it is a covering map. 
Since  $\widetilde{A}/T^{k-l}$ is diffeomorphic to a disk, it must be a regular covering map.   If the covering is $m$-sheeted, then there exists a non-trivial element in the deck group acting on $m$ points of the fiber.  By Brouwer's fixed point theorem any such transformation should have a fixed point in the disk.  As all deck transformations act freely, this transformation must be trivial.  Hence $m=1$ and $p_{\scaleto{A}{4pt}}\circ p\circ p_{\scaleto{\widetilde A}{5pt}}^{-1}$ is a diffeomorphism after smoothing the corners. The result then follows. 

It remains to prove  Part 2.
In the case where the lifted group is $T^{k-l}$ on $\widetilde{A}$,  by Part 2 of Lemma \ref{Fpi1nontrivial} and its proof,  $T^{k-l}$ acts isotropy-maximally on $\bar{A}$ with $T^{k-l}$ acting trivially on the $\rrr$-factor.  
By hypothesis, $\bar{A}/T^{k-l}$ is diffeomorphic to a disk. 
Thus  $\widetilde{A}/T^{k-l} \simeq  D^{n-k-l}\times \rrr$. 
Since $p_{\scaleto{A}{4pt}}\circ p\circ p_{\scaleto{\widetilde A}{5pt}}^{-1}:\widetilde{A}/T^{k-l}\rightarrow A/T^{k-l}$  is a local diffeomorphism and $\widetilde{A}/T^{k-l}$ is simply connected, the path-lifting property holds. Hence, 
 $p_{\scaleto{A}{4pt}}\circ p\circ p_{\scaleto{\widetilde A}{5pt}}^{-1}$ is a covering map (see, for example,  the  proof of Proposition 6 of Section 5.6 in DoCarmo \cite{DoC}).
 Since $A$ is closed, it follows that $A/T^{k-l}$ is diffeomorphic   to $D^{n-k-l-1}\times S^1$, after smoothing the corners.

If instead the lifted group is $T^{k-l-1}\times \rrr$, then $(\bar{A}\times \rrr)/(T^{k-l-1}\times \rrr)=\bar{A}/T^{k-l-1}$. Since the $T^{k-l-1}$-action on $\bar A$ is strictly almost isotropy-maximal,  the  hypothesis implies that $\bar{A}/T^{k-l-1}$ is diffeomorphic to a disk, after smoothing the corners. It remains to show that $\widetilde{A}/(T^{k-l-1}\times \rrr)$ is diffeomorphic to $A/T^{k-l}$, but this follows as in the proof of Part 1. 
\end{proof}

\begin{proof}[Proof of Theorem \ref{p1}]
Since $F$ is a closed, totally geodesic submanifold of $M$, it is non-negatively curved. Moreover, $F$ admits a strictly almost isotropy-maximal $T^{k-1}$-action  of cohomogeneity strictly less than $n-k$. Since $(M, T^k)$ is a Property C manifold, by definition, so is $(F, T^{k-1})$. 
If $F$ is simply connected, then  by hypothesis, 
 the $T^{k-1}$-action on $F$  is locally standard and  $F/T^{k-1}$ is diffeomorphic to a standard disk $D^{n-k-1}$ after smoothing the corners. 
It remains to consider the case where $F$ is not simply connected. Since $\pi_1(F)$ is cyclic, the result follows from  
Proposition \ref{Ftilde}.  
\end{proof}

\subsection{Step 3 of the Proof of Theorem \ref{6.1analog}}
Our goal in this subsection is to prove the following theorem, which establishes an analogue of Theorem \ref{6.1analog} for $N$.

\begin{theorem}\label{p2} Let $(M, T^k)$ be a Property C manifold with $M$ simply connected.  Assume that Theorem \ref{6.1analog} holds for torus actions of cohomogeneity $<n-k$.
Then the induced torus action on $N$ is locally standard and $N/T$ is diffeomorphic to a disk or the product of a disk and a circle, after smoothing the corners.
\end{theorem}

 Before we begin the proof of Theorem \ref{p2}, we generalize Proposition \ref{lstd} to closed, non-negatively curved manifolds with cyclic fundamental group
admitting an isotropy-maximal torus action in Proposition \ref{lstdcyclic} below. We leave the details of the proof to the reader, observing that the proof of Part 1 of Proposition \ref{lstdcyclic} follows along the lines of the proof of Case 2 of Proposition \ref{lstd}.
To obtain the proof of Part 2 of Proposition \ref{lstdcyclic}, $M/T$ can be shown to be diffeomorphic to a disk by 
 applying the corresponding arguments of the  proof of Proposition \ref{Ftilde}, using the fact that when $\pi_1(M)\cong \zzz$ the lifted group can only be $T^{k-1}\times \rrr$, with the $\rrr$-factor acting freely on the $\rrr$-factor in $\widetilde{M}$. Since a diffeomorphism of nice manifolds with corners preserves faces, we see that  all closed faces of $M/T$ are diffeomorphic to disks. With this, we obtain the following proposition.

\begin{proposition}\label{lstdcyclic} Let $M\in \mathcal{M}_0^{T^k}(n)$ with an isotropy-maximal $T^k$-action. Suppose that $M$ has cyclic fundamental group. 
Then the following are true:
\begin{enumerate}
\item The torus action on $M$ is locally standard,
in particular,  $M/T$ is a nice manifold with corners, such that the isotropy groups are constant on all open faces of $M/T$; and
\item $M/T$ and all closed faces of $M/T$ are diffeomorphic to  
disks, after smoothing the corners.
\end{enumerate}
\end{proposition}

The proof of Theorem  \ref{p2} breaks into two cases.
Observe that $\pi_{F}: E \rightarrow F$ is a $C$-bundle over $F$. Let $x\in M$ be chosen so that $T(x)\cong T^k/T^{n-k-1}\cong T^{2k-n+1}$ is an orbit of smallest dimension in $M^n$, and $x'\in E$ so that $\pi_{F}(x')=x$. The orbit $T(x')$ in $E$  is an orbit of type $T/T'$ where $T'$ is some codimension-one subtorus of $T_x\cong T^{n-k-1}$.
Since the $T/C$-action on $F$ is locally standard by Theorem \ref{p1} and $T(x)$ is an orbit of smallest dimension, there is a neighborhood of $T(x')$ in $M$ which is equivariantly diffeomorphic to \begin{equation} \label{e1}(T/T' \times W \times \rrr)\times \rrr=T/T' \times W \times\rrr^2, \end{equation}
where $W$ is a faithful $T'$-representation of dimension $2 \dim(T')$ and $T$ acts trivially on $\rrr^2$ since one of the $\rrr$-factors is normal to $E$.
Let $T(y)$ be the projection of the orbit $T(x')$ to $N$. Since $\dim(T(x')) = \dim(T(x)) + 1$, there are two cases:
\begin{enumerate}
\item[] {\bf Case ${\bf 1}$}\label{case1}:
$T(y)$ is an orbit of smallest dimension, that is, $\dim(T(y)) = \dim(T(x))$; or
 \item[] {\bf Case ${\bf 2}$}\label{case2}: $T(y)$ is not an orbit of smallest dimension, that is, $\dim(T(y))=\dim(T(x))+1$.
\end{enumerate}

We further subdivide Case $2$ into two subcases, as follows.
Since $\pi_N$ is an equivariant map, it follows that $T(y)$ is a $T$-orbit of type $T/(H_0\times T')$, where $H_0\subset C$, is a finite subgroup. By a similar argument as above, we see that $T(y)$ has an invariant neighborhood in $M$ equivariantly diffeomorphic to 
\begin{equation}
\label{e2}
T/T'\times_{H_0} W \times\rrr^2\,,
\end{equation}
where $T'$ acts effectively on $W$ and the $H_0$-action on $W \times \rrr^2$ commutes with the $T'$-action. 
Moreover, since $\pi_N$ is an equivariant submersion, there is an  $\rrr$-factor  normal to $N$ because there was an $\rrr$-factor normal to $E$ in Display \ref{e1}.

 The normal bundle of the orbit $T(y)$ in $N$ is an invariant sub-bundle of the normal bundle of the orbit $T(y)\subset M$.  By the Slice theorem, the normal bundle of the tangent bundle to the orbit $T(y)$  is 
 isomorphic to the homogeneous vector bundle $T/T'\times_{H_0} W \times \rrr^2$.

Since $T'\cong T^{n-k-2}$  acts effectively and isotropy-maximally on $W$, the invariant sub-bundles of this normal bundle are of the form
\begin{equation}
\label{e3}
T/T'\times_{H_0} W' \times\rrr^j,
\end{equation}
where $W'$ is a faithful $T''$-representation of dimension $2 \dim(T'')$, with $T''\subset T'$, and $0\leq j\leq 2$.  Since one  $\rrr$-factor is normal to $N$, the dimension of $N$ must be either   $2k-n+2+2\,\dim(T'')+1$ or $2k-n+2+ 2\,\dim(T'')$. In the first case, the codimension of $N$ is $2\,\dim(T')-2\,\dim(T'')+1$ and in the second it is $2\,\dim(T')-2\,\dim(T'')+2$.
 
 The two subcases are then:
 \begin{enumerate}
\item[] {\bf Case ${\bf 2.a}$:} The codimension of $N$ is odd and greater than or equal to three; and
\item[] {\bf Case ${\bf 2.b}$:}  The codimension of $N$ is even and greater than or equal to two.
\end{enumerate}
Note that if the codimension of $N$ is strictly greater than $2$, then 
 $\pi_1(F) = 0$ by Theorem \ref{ES1}.  Moreover, when $T''\subsetneq T'$,  $N$ is fixed by a subtorus $T'''\subset T'$, where $T'/T''\cong T'''$.  Note that  in Case 2.a, while $N$ is fixed by a subtorus $T'''$, it is {\em not} the fixed point set component of $T'''$.

For   Case $2.a$, letting $r=\dim(T''')=dim(T')-\dim(T'')$, 
we prove the following proposition. 
\begin{proposition}\label{case2a} Let $M\in \mathcal{M}_0^{T^k}(n)$ be simply connected and let the $T^k$-action be strictly almost isotropy-maximal. Let $C$ 
be the circle subgroup of $T^k$ fixing $F$.   Suppose that $T^r$ fixes $N$ and $\codim(N)=2r+1$, with $1\leq r\leq k$. Then $C$ acts freely on $N$, and $N/C$ is a closed, simply connected, non-negatively curved manifold with an induced isometric, strictly almost isotropy-maximal $T^{k-r-1}$-action.
\end{proposition}

\begin{proof}
Suppose that $C$ does not act freely on $N$. Let $\Gamma$ be a subgroup of $C$ fixing a point $z\in N$.   Since $\pi_N: E \longrightarrow N$ is an $S^{2r}$-bundle over $N$ and $C$ acts freely on $E$, it follows that $\Gamma$ must act freely on $\pi_N^{-1}(z) \simeq S^{2r}$.  But $\Gamma$ is a subgroup of $C$, hence it acts by orientation preserving isometries. Since the fiber is  $S^{2r}$,  $\Gamma$ must fix a point, giving us a contradiction. 

Moreover, whenever $\pi_1(F)=0$, regardless of the codimension of $N$, the fundamental group of $N$ will be generated by the $C$-orbit in $N$.  This follows exactly as in the proof of Lemma 6.3 in \cite{Wie} (cf. the proof of Part $5$ of Theorem 3.3 in \cite{Do}).
 Since the codimension of $N$ is greater than or equal to $3$ in this case, $\pi_1(F)$ is trivial, and so the closed, non-negatively curved manifold,  $N/C$, must have trivial fundamental group. Finally,  since $T(y)$
 is the smallest possible orbit in $N$, it follows that its image in $N/C$ is also the smallest possible orbit in $N/C$.  Since $C$ acts freely on $N$, $N/C$ is a manifold. Recall that while $N$ is fixed by $T^r$, it is not a fixed point set component of $T^r$.
 Now, while we cannot say that $N$ is non-negatively curved, we do know that $N/C$ in $M/C$ is non-negatively curved  (see the proof of Theorem 3.28 in \cite{Spi} (Theorem \ref{Spindeler} here)). The result then follows.
\end{proof}

   \medskip
 
For Case $2.b$, we prove the following lemma. 
 
  \begin{lemma}\label{Ncase2b}    Let $(M, T^k)$ be a Property C manifold with $M$ simply connected.  Suppose that  $\dim(N)=n-2r-2$ and $N$  is fixed by $T^r\subset T'\subset T$, with $r\geq 0$. Then the induced action on $N$ is locally standard  and $N/T$ and all of its faces are diffeomorphic to disks, after smoothing the corners.
 \end{lemma}

\begin{proof}

Recall that $T'\times H_0$ is fixing the $T(y)$-orbit in $N$ and $\dim(T(y))=2k-n+2$. Let $C$ be the circle fixing $F$.
We first claim that  $H_0$ is trivial.

Suppose  that $H_0$ is non-trivial in order to derive a contradiction. Note that the $(T'\times H_0)/T^r$-action on the unit normal $S^{2(n-k-r-2)-1}$ to the $T(y)$-orbit in $N$ is of rank $n-k-r-2$, that is, of maximal symmetry rank. This implies that the induced $(T'\times H_0)/T^r$-action is $H'$-ineffective, where $H'\cong H_0$ is some non-trivial subgroup of $T'\times H_0$.  
Hence $N$ is fixed  by some non-trivial, disconnected subgroup of $T^k$, and  so $(M, T^k)$ is not a Property C manifold, a contradiction.

We now 
assume that $H_0$ is trivial.  A straightforward calculation shows that the induced torus action on $N$ is isotropy-maximal.  We break the proof into two cases: $r\geq 1$ and $r=0$. 

 In the first case, if $N$ is simply connected, we may use Proposition \ref{lstd} to see that the induced $T^{k-r}$-action is locally standard and  $N/T$ and all of its faces are diffeomorphic to disks, and if $N$ is not simply connected, we use Proposition \ref{lstdcyclic} to obtain the same result. 

Suppose then that $r=0$. Then $\dim(N)=n-2$ and $N$ is not fixed by any isometry. Again, while we cannot say that $N$ is non-negatively curved, we do have  that $N/C$ in $M/C$ is a non-negatively curved manifold.  
Moreover, the induced torus action on $N/C$ is isotropy-maximal. Since $\dim(N)=n-2$, we cannot assume that $\pi_1(F)$ is trivial. We then have two cases: where $N/C$ is simply connected and where $\pi_1(N/C)$ is cyclic.   In both cases, we see that the torus action is locally standard and its quotient space and all of the faces of its quotient space are disks, by Proposition \ref{lstd} for the first case and by Proposition \ref{lstdcyclic} for the second case.  Since $N$ is a principal circle bundle over $N/C$, it follows that the action on $N$ is locally standard and $N/T$ and all of its faces are disks. 

\end{proof}

\begin{remark} In the case where $H_0$ is non-trivial, while $(M, T^k)$ is not a Property C manifold, we still obtain that the action on $N$ is locally standard and  $N/T$ and all of its faces are diffeomorphic to disks.
This follows because $N$ is fixed by a disconnected subgroup of the torus and so $N$ is non-negatively curved, has cyclic fundamental group, and admits an induced almost-effective isotropy-maximal $T^{k-r}$-action. The same argument as in the case where $r\geq 1$ and $H_0$ is trivial then applies.
\end{remark}

We are now ready to prove Theorem \ref{p2}.
\begin{proof}[Proof of Theorem \ref{p2}\label{Proof3.9}]

  Assume first that we are in \hyperref[case1]{Case $1$}.  
Then $T(y)$ is an orbit of smallest dimension.   
Since  $T(y)\subset N$ is a smallest orbit, and $N$ is an invariant submanifold, there is some generalized characteristic submanifold, $F'=\Fix(M; T^1)$,  containing $N$, for some $T^1\subset T^k$.  Indeed, $N$ is the fixed point set component of some subtorus of $T_y$ 
and is one of the $F'_i$ in the chain of inclusions of totally geodesic submanifolds in $F'$, where $i$ denotes the rank of  the subtorus fixing $N$.  By Lemma \ref{F_i}, the induced action of the torus on $N$ is strictly almost isotropy maximal and of cohomogeneity strictly less than $n-k$. 

Note that $(F', T^{k}/T^1)$ is a Property C manifold.
Applying the arguments made earlier in the proof of Theorem \ref{p1} for $F$ to $F'$, we see that the action on $F'$ and hence on $N$ is locally standard. Thus, $(N, T/T^{'''})$ is a Property C manifold.
Since  $N$ has cyclic fundamental group,  Proposition \ref{Ftilde}  gives us the proof of  \hyperref[case1]{Case $1$}. 

  We now consider \hyperref[case2]{Case $2$}, beginning with Case $2.a$.
In order to show that the induced torus action on $N$ is locally standard and $N/T$ is diffeomorphic to a disk,  note that the lifted action on the total space of a principal $S^1$-bundle over a manifold admitting a locally standard torus action is also locally standard. Since $N$ is of dimension $n-2r-1$, the cohomogeneity of the induced $T$-action on $N$ and hence the cohomogeneity of the induced torus action on $N/C$ are both  strictly less than $n-k$. The result then follows  by Proposition \ref{case2a} and the hypothesis.
  For Case $2.b$, Lemma \ref{Ncase2b} gives us the result.
  This completes the proof of  \hyperref[case2]{Case $2$}. 
\end{proof}

\subsection{Step 4 of the Proof of Theorem \ref{6.1analog}}

 In order to finish the proof of Theorem \ref{6.1analog}, it remains to prove Parts 1 and 2, that is, we need to show that the torus action on $M$ is 
locally standard and $M/T$ is diffeomorphic  to a standard disk $D^{n-k}$ after smoothing the corners.

\begin{proof}[Proof of Theorem \ref{6.1analog}]
Recall that by Theorems \ref{p1} and \ref{p2}, the induced torus actions on $F$ and on $N$ are locally standard and both $F/T$  and $N/T$ are diffeomorphic to either a disk or a product of a circle and a disk, after smoothing the corners.

We now show that the $T^k$-action on $D(F)$ is locally standard and that $D(F)/T$ is either a disk or a product of a disk with a circle.
 Since  $F$ is fixed by a circle subgroup, and the action of the circle on the unit normal circle to $F$ is free, the torus action on $D(F)$  is locally standard, as desired.
 
 When $F/T$ is a disk, we have 
 $$D(F)/T \simeq F/T\times I \simeq D^{n-1-k}\times I\simeq D^{n-k}. $$  In the case where $F/T$ is the product of a disk with a circle, we have that  $D(F)/T$ is a $1$-disk bundle over $D^{n-k-2}\times S^1$.
 Recall that a $1$-disk bundle comes from a line bundle and oriented line bundles are in one-to-one correspondence with $H^1(B; \zzz_2)$, where $B$ denotes the base of the line bundle.   Here, the base is $D^{n-k-2}\times S^1$, so there are only two such bundles:   $D^{n-k-1}\times S^1$ or $D^{n-k-2}\times \mathrm{Mb}$, where $\mathrm{Mb}$ is the M\"obius band.

We next show that the $T$-action on $D(N)$ is locally standard and that $D(N)/T$ is diffeomorphic to one of $D^{n-k}$, $D^{n-k-1}\times S^1$, or $D^{n-k-2}\times \mathrm{Mb}$. We break the proof into the same cases as in the proof of Theorem \ref{p2}.

In \hyperref[case1]{Case $1$}, since  $N$ is fixed by some subtorus, and the action of the subtorus on the unit normal sphere to $N$ is of maximal symmetry rank,  the torus action on $D(N)$  is locally standard and the normal disk bundle of $N$ modulo the subtorus action is a disk. 
In the case where $N/T$ is a disk, we see that $$D(N)/T \simeq N/T\times \Delta^l \simeq D^{n-k}.$$
If instead $N/T$ is  a product of a disk with a circle, as we saw above with $D(F)/T$, $D(N)/T$ is then either $D^{n-k-1}\times S^1$ or $D^{n-k-2}\times \mathrm{Mb}$.

We now consider \hyperref[case2]{Case $2$}.  Since $(M, T^k)$ is a Property C manifold, $H_0\subset C$ is trivial by the proof of Theorem \ref{Ncase2b}.  
This fact combined with the invariant neighborhood description given in Display \eqref{e3},   gives us that the $T$-action on $D(N)$  is locally standard.  Moreover, in Case $2.a$, we obtain
$$D(N)/T\cong N/T \times (\Sigma^m\times I)\cong D^{n-k},$$
 and in Case $2.b$,   we obtain
$$D(N)/T\cong N/T \times (\Delta^m\times D^2)\cong D^{n-k}.$$

Since $E$ is a principal circle bundle over $F$, we see that the action on $E$ is locally standard and  since $E/T=F/T$, we see that $E/T$ is also diffeomorphic to a disk or the product of a disk and circle. It then follows that the $T$-action on $M$ is locally standard.

We have seen that $D(F)/T$ and $D(N)/T$ can both be one of $D^{n-k}$, $D^{n-k-1}\times S^1$, or $D^{n-k-2}\times \mathrm{Mb}$. Recall that by Corollary 6.3 of Chapter 2 in \cite{Br}, $\pi_1(M/T)$ is trivial. 
Hence, if both $D(F)/T$ and $D(N)/T$ are diffeomorphic to either $D^{n-k-1}\times S^1$ or $D^{n-k-2}\times \mathrm{Mb}$, then $M/T$ is diffeomorphic to $D^{n-k-1}\times S^1$ or $D^{n-k-2}\times \mathrm{Mb}$, giving us a contradiction. 
If instead one of $D(F)/T$ or $D(N)/T$ is diffeomorphic to a disk and the other to $D^{n-k-2}\times \mathrm{Mb}$, then $M/T$ is diffeomorphic to $D^{n-k-2}\times \rrr\mathrm{P}^2$, again giving us a contradiction. Likewise, one sees that if one of $D(F)/T$ or $D(N)/T$ is diffeomorphic to $D^{n-k-1}\times S^1$ 
and the other to $D^{n-k-2}\times \mathrm{Mb}$, then $M/T$ is homotopy equivalent to $D^{n-k-2}\times \mathrm{Mb}$, a contradiction.
There are two cases left to consider, either both $D(F)/T$ or $D(N)/T$ are diffeomorphic to a disk, or one of $D(F)/T$ or $D(N)/T$ is diffeomorphic to a disk and the other to $D^{n-k-1}\times S^1$.
In the first case it is clear that $M/T$ is diffeomorphic to a disk. In the latter case, considering the quotient of the disk bundle decomposition, one sees that 
$M/T$ is diffeomorphic to a disk, as desired. 
\end{proof}

The proof of Theorem \ref{6.1analog} is now complete.

\section{The Proof of Theorem \ref{extend}}\label{s4}

The goal of this section is to prove Theorem \ref{extend}.
Recall that by Theorem \ref{Spindeler}, $M$ decomposes as $$M=D(F)\cup_E D(N),$$
where $F$ is a generalized characteristic submanifold fixed by a circle subgroup $C$ of $T^k$, $N$ is a $T^{k}$-invariant submanifold at maximal distance from $F$, and $E$ is 
the common boundary of their respective disk bundles. Moreover, by Corollary \ref{Ncyclic}, both $F$ and $N$ have cyclic fundamental group.

We first show that the extension, if it exists, is unique.
\begin{proposition}\label{unique} Let $T^k$ act smoothly and (almost) effectively  on $M^n$, a connected manifold, and assume that the action is strictly almost isotropy-maximal. If the $T^k$-action admits a non-trivial extension to a smooth, (almost) effective, and isotropy-maximal $T^{k+1}$-action, then the extension is unique. 
\end{proposition}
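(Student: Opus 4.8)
The plan is to show uniqueness by a dimension/rigidity argument at a suitable orbit. Suppose $T^k$ acts smoothly, effectively and almost isotropy-maximally on $M^n$, and that there exist two extensions to smooth effective $T^{k+1}$-actions, say via inclusions $\iota_1,\iota_2\colon T^k\hookrightarrow T^{k+1}$ with complementary circles $S^1_1,S^1_2$. First I would fix a point $x_0\in M$ whose $T^k$-isotropy group has dimension $n-k-1$; since the extended actions are isotropy-maximal, there is (possibly after moving to a nearby orbit) a point $x\in M$ whose $T^{k+1}$-isotropy has dimension $n-k-1$ for each extension, so that $T^{k+1}(x)$ is a minimal orbit in the sense of Section~\ref{s2.1}, and the slice representation at $x$ is the standard faithful $T^{k+1}$-representation on $\R^{n}\cong\ccc^{m}\oplus\R^{n-2m}$ with $2m = n - (n-k-1) = k+1$, i.e.\ on $\ccc^{(k+1)}$ together with a trivial factor of dimension $n-2(k+1)$. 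The key point is that this standard representation has $T^{k+1}$ as its full connected isometry group acting by isometries fixing the origin and commuting with nothing larger, so the extension is forced locally.

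Concretely, the key steps in order are: (i) locate the minimal orbit $T^{k+1}(x)$ common to both extensions — here one uses that any isotropy-maximal extension of the same $T^k$-action must see the same maximal $T^k$-isotropy subgroup sitting inside a rank-one-larger isotropy, and Lemma~\ref{Ishida} to rule out a still larger torus; (ii) analyze the slice representation at $x$: both $S^1_1$ and $S^1_2$ act on the slice $\ccc^{(k+1)}\oplus\R^{n-2(k+1)}$ extending the given faithful $T^k$-representation, and faithfulness together with the requirement that the ambient torus be exactly $(k+1)$-dimensional pins down the weight lattice, so $\iota_1$ and $\iota_2$ differ at most by an automorphism of $T^{k+1}$ fixing $T^k$; (iii) observe that an automorphism of $T^{k+1}$ restricting to the identity on the corank-one subtorus $T^k$ is either the identity or acts as $-1$ on the complementary circle, and the latter gives the \emph{same} action on $M$ (only the parametrization of $S^1$ is reversed), hence the two extensions agree as actions; (iv) finally, propagate this local agreement globally: the two $T^{k+1}$-actions agree on the invariant tube around $T^{k+1}(x)$, and since $M$ is connected and the set where two smooth torus actions (sharing the subaction $T^k$) agree is open and closed — being cut out locally by the same slice-representation rigidity at every orbit — they coincide on all of $M$.

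The main obstacle I expect is step (iv), the passage from local to global uniqueness: a priori two $T^{k+1}$-extensions could agree near the minimal orbit but differ elsewhere, and closing this requires knowing that the extension is rigid along \emph{every} orbit, not just the minimal one. I would handle this by using the almost-locally-standard structure of $M/T^k$ established in Theorem~\ref{6.3analog} together with the disk-bundle decomposition $M = D(F)\cup_E D(N)$: on each piece the $S^1$-extension is determined by its effect on $F$ (by the induction hypothesis on $\dim M$) and on the normal bundle data, and one checks compatibility across $E$. An alternative, cleaner route — and the one I would actually pursue — is to note that the quotient map $M\to M/T^k$ is fixed by both extensions, so the extra $S^1$ descends to an $S^1$-action on the manifold-with-corners $M/T^k\cong D^{n-k}$; since the boundary strata correspond to prescribed isotropy data that both extensions must respect, and $D^{n-k}$ has no nontrivial such $S^1$-actions beyond the one forced by the combinatorics, the descended action — and hence the lift — is unique. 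The induction on $n$ is what makes either version go through, exactly as in the proof of Theorem~\ref{s^1}.
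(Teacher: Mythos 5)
Your proposal and the paper's proof take genuinely different routes, and yours has a real gap. The paper's argument is short and avoids any local-to-global step: suppose two extensions $T^k\times T^1_1$ and $T^k\times T^1_2$ exist, pick $p$ with both $T^1_1(p)$ and $T^1_2(p)$ principal, and examine the intersection $T^1_1(p)\cap T^1_2(p)$. This compact set is either all of both circles (so the actions coincide) or finite; in the finite case, $T^1_2(T^1_1(p))$ is an orientable $S^1$-bundle over $S^1$, hence a $2$-torus, so together with $T^k$ one gets an effective $T^{k+2}$-action containing the isotropy-maximal $T^{k+1}$-action as a proper sub-action — contradicting Lemma~\ref{Ishida}. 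No slice analysis, no induction, no appeal to Theorem~\ref{6.3analog}.

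Your approach — pin down the extension by slice-representation rigidity at a minimal orbit and then propagate — founders exactly where you predicted: step~(iv). The assertion that ``the set where two smooth torus actions (sharing the $T^k$-subaction) agree is open and closed'' is not automatic for merely smooth actions; two extensions could in principle agree on a tube around a minimal orbit and diverge elsewhere, and nothing in your argument rules this out. Your proposed fix via the quotient $M/T^k$ does not close the gap either: even granting that the descended $S^1$-action on $M/T^k$ is unique, a given $S^1$-action on the base does not determine its lift to $M$ (lifts of a circle action to a bundle are generally non-unique), so you still cannot conclude uniqueness upstairs. There is also a smaller slip in step~(iii): the automorphisms of $T^{k+1}=T^k\times S^1$ restricting to the identity on $T^k$ form the group $\mathbb{Z}^k\rtimes\{\pm 1\}$ (given by $s\mapsto (t(s),s^{\pm 1})$ for a homomorphism $t\colon S^1\to T^k$), not just $\{\pm 1\}$; the conclusion you want (same image subgroup, hence same action) is still true for all of these, so this is fixable, but the argument as written is incomplete. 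Finally, your appeal to an induction on $n$ ``as in the proof of Theorem~\ref{s^1}'' imports machinery the proposition does not need: the paper proves Proposition~\ref{unique} directly, without induction, and in fact uses it \emph{inside} the inductive proof of Theorem~\ref{s^1}, so assuming the inductive framework here risks circularity. The key missing ingredient in your write-up is Lemma~\ref{Ishida} used as a global obstruction — once you have two distinct commuting circles on top of $T^k$, Ishida's lemma kills the configuration outright, and that is what makes the paper's proof go through without any local-to-global argument.
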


\begin{proof} Suppose instead that there exist two different (almost) effective $T^{k+1}$ extensions of the $T^k$-action, each of which is isotropy-maximal.  Let each be denoted by $T^{k}\times T^1_i$, $i=1, 2$, with $T^1_1$ and $T^1_2$ distinct, smooth, and (almost) effective circle actions on $M^n$. 
Consider the group $G$ generated by $T^k$, $T^1_1$, and $T^1_2$ and the subgroup $H=T^1_1\cap T^1_2$. Then $H$ is isomorphic to the identity, $\zzz_k$, or $S^1$. In the first two cases, $G$ must be of rank $k+2$. But, by Lemma \ref{ishida},  since  each of the $T^{k+1}_i$ actions is the largest possible (almost) effective torus action on $M$, we obtain a contradiction.  Hence $T^1_1\cap T^1_2\cong S^1$ and therefore $T^1_1=T_2^1$ and the result follows.
\end{proof}

\bigskip

\begin{proof}[Proof of Theorem  \ref{extend}] The proof is again by induction on the cohomogeneity of the action.  As described in the proof of Theorem \ref{6.1analog}, the anchor of the induction consists of an isometric, strictly almost isotropy-maximal $T^1$-action on a non-negatively curved $3$-manifold. We already saw in the proof of Theorem \ref{6.1analog} that this $T^1$-action is locally standard and $T^1$-fixed-point-homogeneous. In particular, $M^3$ decomposes as a union of disk bundles,  $M^3=D(F)\cup D(N)$, where $F$ is a fixed circle and $N$ is a circle orbit. Moreover, $M\,\setminus\, F$ consists only of principal orbits of the $T^1$-action. Thus, we can apply the construction contained in the proof of Proposition 3.6 in \cite{GGSpi} to obtain a smooth $T^2$-action extending the isometric $T^1$-action. By construction, the $T^2$-action is locally standard. Note that the extended action is by cohomogeneity one and since $M^3$ is closed and simply connected, it follows that the quotient space is an interval. Uniqueness of the extension follows directly from Proposition \ref{unique}. We summarize this result in the following lemma.

\begin{lemma}\label{GGSP} Let $M^3$ be a closed, simply connected, non-negatively curved Riemannian manifold admitting an isometric, effective, and strictly almost isotropy-maximal $T^1$-action. Then the  isometric $T^1$-action on $M^3$ may be extended to a unique, smooth, isotropy-maximal $T^2$-action. Moreover, the extended $T^2$-action is locally standard and $M^3/T^2$ is a $1$-disk and its faces are $0$-disks. 
\end{lemma}

We now consider an isometric, strictly almost isotropy-maximal $T^k$-action on $M^n$, a closed, simply connected, non-negatively curved Riemannian manifold.
For any isometric, strictly almost isotropy-maximal $T^l$-action on a closed, simply connected, non-negatively curved manifold, $M^m$,  of cohomogeneity $m-l<n-k$, we  assume that  the isometric $T^l$-action extends to a smooth, locally standard, and isotropy-maximal $T^{l+1}$-action.

We break the proof into 
two steps: Step $1$, where we extend the torus action first  on $D(F)$, and Step $2$, where we prove the corresponding result for  $D(N)$. In order to complete the proof of Theorem \ref{extend}, we note that by Proposition \ref{unique}, since the extended (almost) effective actions agree on their common boundary, $E$,  we obtain a smooth, isotropy-maximal, (almost) effective $T^{k+1}$-action on all of $M$. Moreover, by construction, the $T^{k+1}$-action is  locally standard. We observe that in the case where the extended action is almost effective, we may then mod out by the finite ineffective kernel to obtain the desired effective, smooth, locally standard, isotropy-maximal $T^{k+1}$-action on all of $M$.

It remains only to prove Steps 1 and 2. 

\medskip
\noindent
{\bf Step 1}:  
The $T^k$-action on  $D(F)$ extends to a smooth, (almost) effective, isotropy maximal and locally standard $T^{k+1}$-action, and $F/T$ and $D(F)/T$ and all of their faces are diffeomorphic to disks, after smoothing the corners.

\vspace{2mm}

We first show that we can extend the induced isometric, strictly almost isotropy-maximal $T^{k-1}$-action on $F$ to a smooth, effective, isotropy-maximal, and locally standard $T^k$-action.

We first consider the case where $\pi_1(F)$ is finite. By Part 1 of Lemma \ref{Fpi1nontrivial}, we may lift the action to  a strictly almost isotropy-maximal torus action on $\widetilde{F}$, a closed, simply-connected, non-negatively curved manifold. Theorem \ref{lift} guarantees us that the lift commutes with the deck transformations. By the induction hypothesis, we may extend the $T^{k-1}$-action on $\widetilde{F}$ to a smooth,  isotropy-maximal and locally standard $T^{k}$-action. This then allows us to extend the $T^{k-1}$-action on $F$ to a smooth, isotropy-maximal and locally standard $T^{k}$-action.  We note in this case that the action on $F$ is almost effective.

Likewise, if $\pi_1(F)\cong \zzz$, then by Part 2 of Lemma \ref{Fpi1nontrivial}, we have two cases according to whether the lifted group is compact or not. 
In the case of $\widetilde T^{k-1} = T^{k-1}$, the action of $T^{k-1}$ on
 $\widetilde{F} \simeq \bar{F} \times \rrr$ is isotropy-maximal   
on the $\bar{F}$ factor and trivial on the $\rrr$-factor. 
We can then extend the $T^{k-1}$-action on  $\widetilde{F}$ by an $\rrr$, obtaining a smooth, isotropy-maximal $(T^{k-1} \times \rrr)$-action on $\bar{F}\times \rrr$. 
For the case of $\widetilde T^{k-1} = T^{k-2} \times \rrr$, the action is strictly almost isotropy-maximal on the $\bar{F}$ factor and trivial on the $\rrr$-factor.  We can then use the induction hypothesis to extend the $T^{k-2}$-action on $\bar F$ to again obtain a smooth, isotropy-maximal $(T^{k-1} \times \rrr)$-action on $\bar{F}\times \rrr$.  
In both cases this action commutes with the deck transformations and hence induces a smooth (almost) effective $T^k$-action on $F$ that is both isotropy-maximal and locally standard. 

In both of the above cases, by Proposition \ref{lstd}, we see that $\widetilde{F}/T^{k}$ and all of its faces are diffeomorphic to disks, after smoothing the corners.
A similar argument as in the proof of Theorem \ref{Ftilde} then gives us that $F/T^k$ and all of its faces are diffeomorphic to disks, after smoothing the corners.

Note that the  extended $T^k$-action on $F$ commutes with the $C$-action on $F$, since the latter action is trivial.  To see that the action now extends to $D(F)$, note that the $C$-action acts on the normal space to $F$ by rotating the fibers.
As in the proof of Proposition 3.6 \cite{GGSpi}, we see that this gives us the desired (almost) effective $T^{k+1}$-action on $\nu(F)$ and hence on $D(F)$ via the exponential map.  
Since $F/T^k$  
and all of its faces are diffeomorphic to disks, after smoothing the corners, we may argue as in Step 4 of the proof of Theorem  \ref{6.1analog} to see that $D(F)/T$ and all of its faces are diffeomorphic to disks, after smoothing the corners.

\medskip
\noindent
{\bf Step 2}:  
The $T^k$-action on  $D(N)$ extends to a smooth, (almost) effective, isotropy maximal and locally standard $T^{k+1}$-action, and $F/T$ and $D(F)/T$ and all of their faces are diffeomorphic to disks, after smoothing the corners.

\medskip

As in the proof of Theorem \ref{6.1analog} there are two cases to consider.  Recall that $\pi_F: E \rightarrow F$ is a $C$-bundle over $F$. Recall also that we choose $x \in F$ such that $T(x)$ is a smallest orbit in $F$ and  $x' \in E$ such that $x'\in \pi_F^{-1}(x)$. We let $T(y)$ be the projection of the orbit $T(x')$ to $N$.  There are then two cases: 
\begin{enumerate}
\item[] {\bf Case ${\bf 1}$:}  $T(y)$ is an orbit of smallest dimension, that is, $\dim(T(y)) = \dim(T(x))$; or
\item[] {\bf Case ${\bf 2}$:}  $T(y)$ is not an orbit of smallest dimension, that is, $\dim(T(y))=\dim(T(x))+1$.
\end{enumerate}
As before, Case 2 breaks into two further subcases: 
 \begin{enumerate}
 \item[] {\bf Case ${\bf 2.a}$:} The codimension of $N$ is odd and greater than or equal to $3$; and 
\item[] {\bf Case ${\bf 2.b}$:}  The codimension of $N$ is even and greater than or equal to $2$.
\end{enumerate}
Recall that in Case 2.a,  $\codim(N)=2r+1$, $r\geq 1$, and  in Case 2.b, $\codim(N)=2r+2$, $r\geq 0$,  
where $N$ is fixed by a subtorus of rank $r$.

For Case 1, recall that as we saw in the \hyperref[Proof3.9]{proof of Case 1 of Theorem \ref{p2}}, $N$ is contained in some generalized characteristic submanifold, $F'$, containing $T(y)$. In fact, $N$ is the fixed point set component of some subtorus $T^i\subset T_y$, with $1\leq i\leq n-k-1$, and is thus one of the $F'_i$ in the chain of inclusions of totally geodesic submanifolds in $F'$. 
Moreover, $T^i$ acts by maximal symmetry rank on the unit normal sphere to $N$.
Since $N$ has cyclic fundamental group,  as in the proof of 
Step 1, we may now extend the induced torus action on $N$ via the induction hypothesis. Since  the $T^i$-action on the unit normal sphere to $N$ is of maximal symmetry rank, and $T^i$ commutes with the circle of the extension, we may extend the action to $D(N)$. Moreover, $N/T$, $D(N)/T$, and all of their respective faces are diffeomorphic to disks, after smoothing the corners.

For  Case $2.a$,  the arguments in 
 Proposition \ref{case2a} do not rely on $N$ satisfying \hyperref[condC]{Property C}. So, we may conclude here as well that $N$ is a principal $C$-bundle over a closed, simply connected, non-negatively curved manifold, $N/C$, with an induced isometric strictly almost isotropy-maximal $T^{k-r-1}$-action.  By the induction hypothesis, we may extend the isometric $T^{k-r-1}$-action on $N/C$ to a $T^{k-r}$-action on $N/C$ that is smooth, isotropy-maximal and locally standard. By Theorem \ref{Su},  we may lift the $T^{k-r}$-action  to $N$ to obtain a smooth, isotropy-maximal, and locally standard $T^{k-r+1}$-action on $N$. Further, the $T^r$-action on the unit normal sphere to $N$ is of maximal symmetry rank and commutes with the circle of the extension.
We then argue as in Case $1$   to see that there is a smooth, (almost) effective, isotropy-maximal, and locally standard $T^{k+1}$-action on $D(N)$ and that $N/T$, $D(N)/T$, and all of their respective faces are diffeomorphic to disks, after smoothing the corners.

For Case $2.b$, we first consider the case where  $N$ satisfies \hyperref[condC]{Property C}. There are two subcases to consider: when   $r\geq 1$ and $r=0$. 

We begin with the case when $r\geq 1$ and  $N$ satisfies \hyperref[condC]{Property C}. As we saw in the proof of Theorem \ref{6.1analog}, 
 the induced torus action on $N$ is  isotropy-maximal and locally standard. The action of $T^r$ on its normal $S^{2r+1}$  is strictly almost isotropy-maximal. The $T^r$-action  fixes $N$, so we extend the $T^r$-ineffective $T^k$-action on $N$ to a $T^{r+1}$-ineffective $T^{k+1}$-action on $N$. By the induction hypothesis, we may also extend the effective $T^r$-action on the normal $S^{2r+1}$ to an effective $T^{r+1}$-action.  By exponentiating,
we then have a smooth, (almost) effective, isotropy-maximal, and locally standard $T^{k+1}$-action on all of $D(N)$.  We use the same argument as in Case 1 to see that $N/T$, $D(N)/T$, and all of their respective faces are diffeomorphic to disks, after smoothing the corners.

Now, consider the case when $r=0$ and  $N$ satisfies \hyperref[condC]{Property C}. In particular,  $N$ is fixed by no subgroup of $T^{k}$. Then we extend the action on $D(N)$ as in the proof of Proposition 3.6 in \cite{GGSpi}. That is, we let the additional $T^1$ fix $N$ and act freely on the unit normal circle to $N$. This gives us the desired smooth, (almost) effective, isotropy-maximal, and locally standard $T^{k+1}$-action on $D(N)$.  In fact, in the proof of Lemma \ref{Ncase2b}, we saw that $N/T^k$ and all of its faces are diffeomorphic to disks, after smoothing the corners and since the $T^1$ extending the $T^{k}$-action fixes $N$, we see that $N/T^{k+1}$ has exactly the same quotient space.  Since the $T^1$ fixing $N$ acts freely on the unit normal circle to $N$, $D(N)/T$ and all of its faces are diffeomorphic to disks, after smoothing the corners.

If $N$ does not satisfy \hyperref[condC]{Property C},  then we extend the $T^k$ action as we did in Case 2b when $r\geq 1$. That is, the $(T^r\times H')$-action in the proof of Proposition \ref{Ncase2b} fixes $N$, so we extend the $(T^r\times H')$-ineffective $T^k$-action on $N$ to a $(T^{r+1}\times H')$-ineffective $T^{k+1}$-action on $N$. By the induction hypothesis, we may also extend the linear $(T^r\times H')$-action on the normal $S^{2r+1}$ to a linear $(T^{r+1}\times H')$-action which is $H''\cong H'$-ineffective, where $H''\subset T^{r+1}\times H'$.  By exponentiating,
we then have a smooth $T^{k+1}$-action on all of $D(N)$ that is  $H''$-ineffective. We then obtain an almost effective, smooth, isotropy-maximal, locally standard $T^{k+1}$-action on $D(N)$.   Since $N$ is non-negatively curved, we use once again the same argument as in Case 1 to see that that $N/T$, $D(N)/T$, and all of their respective faces are diffeomorphic to disks, after smoothing the corners.
\end{proof}

%%%%%%%%
%%%%%%%%%

\section{The Proofs of the Theorems \ref{RE}, \ref{RE2}, and \ref{main}}\label{s5}

In this section we prove Theorems \ref{RE}, \ref{RE2}, and \ref{main}. 
We observe that the proofs of   Theorems \ref{RE} and \ref{RE2} are quite similar and so are done simultaneously.

\begin{proof}[{\bf Proof of Theorems \ref{RE} and  \ref{RE2}}] For an isotropy-maximal $T^k$-action, we claim that the free rank is at most $2k-n$. If we suppose instead that the free rank is strictly less than $2k-n$, then we can apply the arguments of the proof of Proposition 5.1 in \cite{ES2}, to see that there must be a point in $M$ fixed by a subtorus of rank strictly greater than $n-k$. However, this is impossible, as it would contradict the fact that the maximal rank of an isotropy subgroup of a $T^k$-action on an $n$-manifold is $n-k$. 

Let $T^{2k-n}$ be the almost freely acting subtorus of $T^{k}$. Then $M^{n}/T^{2k-n}$ is a closed, simply connected, $(2n-2k)$-dimensional orbifold admitting an induced smooth $T^{n-k}$-action with a $T^{n-k}$ fixed point, that is, a torus orbifold.  Since the quotient of a closed, rationally elliptic manifold by an effective,  almost free torus action is rationally elliptic, we have that $X^{2n-2k}=M^{n}/T^{2k-n}$ is rationally elliptic (see Observation 6.6 in \cite{ES2}). Letting $P^{n-k}$ be as in Display \eqref{P^n}, we see that in the case of Theorem \ref{RE},  it follows that $M^{n}/T^{k}=X^{2n-2k}/T^{n-k}$ is homeomorphic to $P^{n-k}$ by Proposition \ref{P}. In the case of Theorem \ref{RE2}, again by Proposition \ref{P}, we have that $M^{n}/T^{k}$ is diffeomorphic to $P^{n-k}$, after smoothing the corners. 
Note that the homeomorphism or diffeomorphism is weight-preserving by construction, see \cite{Wie}.

By assumption, the $T^k$-action on $M$ is locally standard. It then follows by the  Cross-Sectioning Theorem in \cite{ES2} that a cross-section for the action on $M$ exists.  In the case of 
Theorem \ref{RE}, the weight-preserving homeomorphism between the orbit spaces then yields an equivariant homeomorphism between the total spaces  by Theorem 3.5 in \cite{ES2}.
  For Theorem \ref{RE2},  the weight-preserving diffeomorphism between the orbit spaces  
  gives rise to an equivariant diffeomorphism 
between the total spaces by Theorem 3.7 in \cite{ES2}, taking into account Remark \ref{Wie}.  We thus obtain the desired total spaces as in the proof of Case $2$ of Theorem \ref{thma}.
\end{proof}

We are now ready to prove Theorem \ref{main}. 
\begin{proof}[{\bf Proof of Theorem \ref{main}}]
As mentioned in the Introduction, Theorem \ref{thma} takes care of the case when the torus action is isotropy-maximal, so we restrict our attention to the case where the torus action is strictly almost isotropy-maximal.
We first note that  $M$ is rationally elliptic by Theorem \ref{misre}. 
We then apply Theorem \ref{extend} to extend the isometric $T^{k}$-action on $M^{n}$ to 
a smooth, isotropy-maximal, and locally standard $T^{k+1}$-action. Moreover, Theorem \ref{extend} gives us that $M/T^{k+1}$ and all of its faces, and in particular, its $4$-dimensional faces, are diffeomorphic to disks, after smoothing the corners. The result now follows by Theorem \ref{RE2}.
\end{proof}

%-------------------------------------------
%        Bibliography
%-------------------------------------------

\end{document}